\numberwithin{equation}{section}
\newtheorem{Theorem}{Theorem}[section]
\newtheorem*{Theorem*}{Theorem}
\newtheorem{Lemma}[Theorem]{Lemma}
\newtheorem{Proposition}[Theorem]{Proposition}
\newtheorem{Claim}[Theorem]{Claim} 
 { \theoremstyle{definition}

\newtheorem{Remark}[Theorem]{Remark} }
\newtheorem{Assumption}[Theorem]{Assumption} 
\newcommand{\basichypergeometricseries}[5]{{}_{#1}\phi_{#2}\!\left(\left.\!\!\!\begin{array}{c} #3 \\ #4 \end{array}\!\!\right| #5 \right)}
\newcommand{\gauss}[2]{\genfrac{[}{]}{0pt}{}{#1}{#2}}
\renewcommand{\b}{q}
\newcommand{\q}{t}
\begin{document}
\allowdisplaybreaks

\newcommand{\arXivNumber}{2106.14497}

\renewcommand{\PaperNumber}{104}

\FirstPageHeading

\ShortArticleName{Scaling Limits for the Gibbs States on Distance-Regular Graphs with Classical Parameters}

\ArticleName{Scaling Limits for the Gibbs States \\ on Distance-Regular Graphs\\ with Classical Parameters}

\Author{Masoumeh KOOHESTANI~$^{\rm a}$, Nobuaki OBATA~$^{\rm b}$ and Hajime TANAKA~$^{\rm b}$}

\AuthorNameForHeading{M.~Koohestani, N.~Obata and H.~Tanaka}

\Address{$^{\rm a)}$~Department of Mathematics, K.N.~Toosi University of Technology, Tehran 16765-3381, Iran}
\EmailD{\href{mailto:m.kuhestani@email.kntu.ac.ir}{m.kuhestani@email.kntu.ac.ir}}

\Address{$^{\rm b)}$~Research Center for Pure and Applied Mathematics, Graduate School of Information Sciences,\\
\hphantom{$^{\rm b)}$}~Tohoku University, Sendai 980-8579, Japan}
\EmailD{\href{mailto:obata@tohoku.ac.jp}{obata@tohoku.ac.jp}, \href{mailto:htanaka@tohoku.ac.jp}{htanaka@tohoku.ac.jp}}
\URLaddressD{\url{https://www.math.is.tohoku.ac.jp/~obata/},\newline
\hspace*{14.5mm}\url{https://www.math.is.tohoku.ac.jp/~htanaka/}}

\ArticleDates{Received July 19, 2021, in final form November 22, 2021; Published online November 26, 2021}

\Abstract{We determine the possible scaling limits in the quantum central limit theorem with respect to the Gibbs state, for a growing distance-regular graph that has so-called \emph{classical parameters} with base unequal to one. We also describe explicitly the corresponding weak limits of the normalized spectral distribution of the adjacency matrix. We demonstrate our results with the known infinite families of distance-regular graphs having classical parameters and with unbounded diameter.}

\Keywords{quantum probability; quantum central limit theorem; distance-regular graph; Gibbs state; classical parameters}

\Classification{46L53; 60F05; 05E30}

\section{Introduction}\label{sec: introduction}

\emph{Quantum probability theory} is a non-commutative extension of classical probability theory; see, e.g., \cite{ALV2002B,Gudder1988B,Meyer1993B,NS2006B,Parthasarathy1992B,VDN1992B}.
This paper is a contribution to the spectral analysis of growing graphs from the viewpoint of this theory.
The adjacency matrix of a graph is regarded as a random variable in this context.
(Formal definitions will be given in Section~\ref{sec: DRGs and CLT}.)
As in many previous works on this topic, our focus will be on central limit theorems (CLTs) for growing graphs.
Of particular interest are growing Cayley graphs.
For example, generators of free groups give rise to \emph{free independent} random variables in the sense of Voiculescu, and we obtain the Wigner semicircle law; cf.~\cite{VDN1992B}.
Another important class of graphs to consider here is that of \emph{distance-regular graphs} \cite{BI1984B,Biggs1993B,BCN1989B,DKT2016EJC}, which generalize distance-transitive (i.e., two-point homogeneous) graphs.
Among many other applications and links, these graphs have been often used as test instances for problems related to random walks on general graphs; see \cite{DKT2016EJC} and the references therein.
Hora \cite{Hora1998IDAQPRT} proved CLTs for several families of distance-regular graphs, including the Hamming graphs (which are also Cayley graphs) and the Johnson graphs, and obtained various distributions, such as the Gaussian, Poisson, geometric, and the exponential distributions.
The CLTs in \cite{Hora1998IDAQPRT} are with respect to the vacuum state, and Hora \cite{Hora2000PTRF} then considered the \emph{Gibbs state}, also known as the \emph{deformed vacuum state}, and extended the CLT for the Johnson graphs and the Hamming graphs.
Later it turned out that distance-regular graphs are particularly well-suited for the method of \emph{quantum decomposition} of the adjacency matrix, which has been playing a key role in obtaining CLTs.
This method was first introduced by Hashimoto \cite{Hashimoto2001IDAQPRT} for certain growing Cayley graphs, and then developed and reformulated further by Hora, Obata, and others; see, e.g.,~\cite{HHO2003JMP,HOT2001P,Hora2004IIS,HO2007B,HO2008TAMS,Obata2017B}.
It not only made the whole theory transparent, but also lead to \emph{quantum} central limit theorems (QCLTs), in which we take into account the three non-commuting components in the quantum decomposition.

Our goal in this paper is to establish QCLTs for those distance-regular graphs said to have \emph{classical parameters}.
For such a graph, the structure of the adjacency algebra is described by just three parameters denoted by $\b$, $\alpha$, and~$\beta$, together with the diameter $d$.
The parameter $\b$, called the \emph{base}, is known to be an integer distinct from $0$ and $-1$, provided that $d\geqslant 3$.
Having classical parameters may look like quite a strong restriction, but it is in some sense rather common among distance-regular graphs.
In fact, except the cycles which we view as trivial, all the \emph{known} infinite families of distance-regular graphs with unbounded diameter either have classical parameters or are closely related to those having classical parameters (by means of halving, folding, doubling, twisting, etc.); cf.~\cite[Section~3]{DKT2016EJC}.
See also \cite{Terwilliger199*M} and \cite[Theorem~6.3]{Tanaka2011EJC} for geometric characterizations of this property.
The classification of distance-regular graphs having classical parameters with $\b=1$ is already complete, and there exist only four infinite families: the Hamming graphs, Doob graphs, halved cubes, and the Johnson graphs.
These graphs were discussed by Hora \cite{Hora1998IDAQPRT,Hora2000PTRF} (see also \cite{HO2007B}) in detail,\footnote{The Doob graphs were not mentioned in \cite{Hora1998IDAQPRT,Hora2000PTRF}, but they have the same classical parameters as certain Hamming graphs, and thus separate discussions are not necessary.} so we will consider the graphs with $\b\ne 1$ in this paper.
Our main result is Theorem \ref{classification} in Section~\ref{sec: DRGs with classical parameters}, where we let $d\rightarrow\infty$ and determine the possible scaling limits for QCLTs in the Gibbs state, in terms of the behaviors of the classical parameters and one other parameter associated with the Gibbs state.
The corresponding weak limits of the normalized spectral distributions will be described explicitly in Section~\ref{sec: description of limit distributions}.
Currently there are fifteen \emph{known} infinite families of distance-regular graphs having classical parameters with unbounded diameter, eleven of which are such that $\b\ne 1$.
Our results apply to these eleven families with $\b\ne 1$, but we stress that they will also be equally applicable whenever we find a new such infinite family in the future.\footnote{The \emph{twisted Grassmann graphs} are the last of these fifteen families and were discovered by Van Dam and Koolen \cite{DK2005IM} in 2005.}
We may also remark that four out of the eleven families are Cayley graphs.

The contents of the other sections are as follows.
In Section~\ref{sec: DRGs and CLT}, we review basic definitions and concepts about algebraic probability spaces and distance-regular graphs, and then recall the QCLT for distance-regular graphs.
Our account follows \cite{HO2007B}.
We will also sharpen the QCLT slightly (Proposition~\ref{remark on QCLT}), by showing that some assumption is redundant.
Section~\ref{sec: more background} is another preliminary section and is devoted to establishing formulas which are needed in Section~\ref{sec: description of limit distributions}.
Section~\ref{sec: examples} discusses concrete examples from the eleven infinite families with $\b\ne 1$ mentioned above.

\section[Algebraic probability spaces, distance-regular graphs, and the quantum central limit theorem]{Algebraic probability spaces, distance-regular graphs,\\ and the quantum central limit theorem}\label{sec: DRGs and CLT}

An \emph{algebraic probability space} is a pair $(\mathcal{A},\varphi)$, where $\mathcal{A}$ is a $*$-algebra over $\mathbb{C}$ and $\varphi\colon \mathcal{A}\rightarrow\mathbb{C}$ is a \emph{state}, i.e., a linear map such that $\varphi(1_{\mathcal{A}})=1$ and that $\varphi(a^*a)\geqslant 0$ for every $a\in\mathcal{A}$, where $1_{\mathcal{A}}$ denotes the identity of $\mathcal{A}$; cf.~\cite[Section~1.1]{HO2007B}.
The elements of $\mathcal{A}$ are called (\emph{algebraic}) \emph{random variables}.
We call $a\in\mathcal{A}$ \emph{real} if $a^*=a$.
For every real random variable $a\in\mathcal{A}$, there exists a~Borel probability measure~$\mu$ on $\mathbb{R}$ (cf.~\cite[Section~1.3]{Bogachev2007B}) such that
\begin{gather}\label{distribution}
	\varphi(a^i)=\int_{-\infty}^{+\infty} \xi^i\mu({\rm d}\xi), \qquad i=0,1,2,\dots.
\end{gather}
We note that such a measure $\mu$ may not be unique.
Several sufficient conditions are known on its uniqueness, such as Carleman's moment test; cf.~\cite[Theorem 1.36]{HO2007B}.

We are interested in algebraic probability spaces arising from graphs.
All the graphs we consider in this paper are finite and simple.
Thus, by a \emph{graph} we mean a pair $\Gamma=(X,R)$ consisting of a non-empty finite set $X$ and a subset $R$ of $\binom{X}{2}$, the set of two-element subsets of~$X$.
The elements of $X$ are \emph{vertices} of $\Gamma$, and the elements of $R$ are \emph{edges} of $\Gamma$.
Two vertices $x,y\in X$ are called \emph{adjacent} (and written $x\sim y$) if $\{x,y\}\in R$.
The \emph{degree} (or \emph{valency}) $k(x)$ of~$x\in X$ is the number of vertices adjacent to $x$.
We call $\Gamma$ $k$-\emph{regular} if $k(x)=k$ for all $x\in X$.
A \emph{path of length $n$ joining} $x,y\in X$ is a sequence of vertices $x=x_0$, $x_1 $, \dots, $x_n=y$ such that $x_{j-1}\sim x_j$ for $j=1,2,\dots,n$.
We will only consider \emph{connected} graphs, i.e., those graphs in which any two vertices are joined by a path.
The \emph{distance} $\partial(x,y)$ of $x,y\in X$ is the length of a shortest path joining them.
The \emph{diameter} of $\Gamma$ is defined by $d=\max\{\partial(x,y)\colon x,y\in X\}$.
Let $M_X(\mathbb{C})$ denote the $\mathbb{C}$-algebra consisting of complex matrices with rows and columns indexed by $X$.
The \emph{adjacency matrix} $A$ of $\Gamma$ is the matrix in $M_X(\mathbb{C})$ defined by
\begin{gather*}
A_{x,y}=\begin{cases} 1 & \text{if} \ x\sim y, \\ 0 & \text{otherwise},\end{cases} \qquad
x,y\in X.
\end{gather*}
By an \emph{eigenvalue} of $\Gamma$, we mean an eigenvalue of $A$.
Likewise, we speak of the \emph{spectrum} of $\Gamma$.

As usual, we view $M_X(\mathbb{C})$ as a $*$-algebra by letting $*$ mean adjoint (i.e., conjugate-transpose).
Associated with the graph $\Gamma$ above is the \emph{adjacency algebra} $\mathcal{A}(\Gamma)$, i.e., the commutative $*$-sub\-algebra of $M_X(\mathbb{C})$ generated by $A$.
Below we give three examples of states for $\mathcal{A}(\Gamma)$.

\medskip\noindent
\textbf{The tracial state.}
This is defined by
\begin{gather*}
	\varphi_{\operatorname{tr}}(B)=\frac{1}{|X|}\operatorname{tr}(B), \qquad B\in\mathcal{A}(\Gamma).
\end{gather*}
For this state, the Borel probability measure $\mu$ from \eqref{distribution} for the random variable $A$ is unique and is the \emph{spectral distribution} of $\Gamma$ given by
\begin{gather*}
\mu(\theta_i)=\frac{m_i}{|X|}, \qquad i=0,1,\dots,e,
\end{gather*}
where $\theta_0,\theta_1,\dots,\theta_e$ are the distinct eigenvalues of $\Gamma$, and $m_i$ denotes the multiplicity of $\theta_i$ in the spectrum of $\Gamma$.

\medskip\noindent
\textbf{The vacuum state.}
Fix a ``base vertex'' $o\in X$, and let
\begin{gather*}
\varphi_0(B)=\langle \hat{o}, B\hat{o}\rangle=B_{o,o}, \qquad B\in\mathcal{A}(\Gamma),
\end{gather*}
where $\hat{o}$ denotes the column vector indexed by $X$ with a $1$ in the $o$ coordinate and $0$ in all other coordinates, and $\langle \cdot,\cdot\rangle$ denotes the standard Hermitian inner product.

\medskip\noindent
\textbf{The Gibbs state.}
This generalizes $\varphi_0$ above.
Let $\q\in\mathbb{R}$, and let
\begin{gather*}
	\varphi_{\q}(B)=\sum_{x\in X} \q^{\partial(x,o)} \langle \hat{x},B\hat{o}\rangle=\sum_{x\in X} \q^{\partial(x,o)} B_{x,o}, \qquad B\in\mathcal{A}(\Gamma),
\end{gather*}
where $0^0:=1$.
The Gibbs state is also called the \emph{deformed vacuum state}.
The scalar $-\log t$ (when $t\geqslant 0$) corresponds to the inverse temperature parameter in the case of the Gibbs state on a canonical ensemble.
It should be remarked however that, unlike the first two examples, the Gibbs state is not always a state.\footnote{For this reason, it would be more appropriate to call $\varphi_{\q}$, say, the Gibbs \emph{functional}.}
See Lemma \ref{when Gibbs state is state} below.

Recall that $\Gamma=(X,R)$ is assumed to be connected with diameter $d$.
For every $i=0,1,\dots,d$, let $A_i$ be the $i^{\mathrm{th}}$ \emph{distance matrix} of $\Gamma$, i.e.,
\begin{gather*}
	(A_i)_{x,y}=\begin{cases} 1 & \text{if} \ \partial(x,y)=i, \\ 0 & \text{otherwise},\end{cases} \qquad x,y\in X.
\end{gather*}
In particular, $A_0=I$ (the identity matrix) and $A_1=A$.
We call $\Gamma$ \emph{distance-regular} if there exist non-negative integers $a_i$, $b_i$, $c_i$, $i=0,1,\dots,d$, such that $b_d=c_0=0$, and that
\begin{gather}\label{3-term recurrence}
	AA_i=b_{i-1}A_{i-1}+a_iA_i+c_{i+1}A_{i+1}, \qquad i=0,1,\dots,d,
\end{gather}
where $b_{-1}A_{-1}=c_{d+1}A_{d+1}:=0$.
We note in this case that $\Gamma$ is $k$-regular with $k=b_0$, $a_0=0$, $c_1=1$,
\begin{gather}\label{a+b+c=k}
	a_i+b_i+c_i=k, \qquad i=0,1,\dots,d,
\end{gather}
and that $b_{i-1}c_i\ne 0$, $i=1,2,\dots,d$.
Moreover, the matrix $A_i$ has constant row and column sum~$k_i$ (which is the number of vertices at distance $i$ from any given vertex), where
\begin{gather}\label{k_i}
	k_i = \frac{b_0b_1\cdots b_{i-1}}{c_1c_2\cdots c_i}, \qquad i=0,1,\dots,d.
\end{gather}
Note that $k_0=1$ and $k_1=k$.

From now on, suppose that $\Gamma$ is distance-regular.
It follows from \eqref{3-term recurrence} that
\begin{gather*}
	\mathcal{A}(\Gamma)=\operatorname{span}\{A_0,A_1,\dots,A_d\},
\end{gather*}
from which it follows that $\dim\mathcal{A}(\Gamma)=d+1$, and hence that $\Gamma$ has exactly $d+1$ distinct eigenvalues $k=\theta_0,\theta_1,\dots,\theta_d$.
Moreover, every matrix in $\mathcal{A}(\Gamma)$ has constant diagonal entries, and hence $\varphi_{\operatorname{tr}}=\varphi_0$.
The Gibbs state $\varphi_{\q}$ is also independent of the base vertex $o\in X$, and we have
\begin{gather}\label{Gibbs state as trace}
	\varphi_{\q}(B)=\frac{1}{|X|}\operatorname{tr}(K_{\q}B), \qquad B\in\mathcal{A}(\Gamma),
\end{gather}
where
\begin{gather*}
	K_{\q}=\bigl(\q^{\partial(x,y)}\bigr)_{x,y\in X}=A_0+\q A_1+\q^2A_2+\dots+\q^dA_d\in\mathcal{A}(\Gamma).
\end{gather*}
From this observation we immediately see that

\begin{Lemma}\label{when Gibbs state is state}
If $\Gamma$ is distance-regular, then the Gibbs state $\varphi_{\q}$ is a state on $\mathcal{A}(\Gamma)$ if and only if the matrix $K_{\q}$ is positive semidefinite.
\end{Lemma}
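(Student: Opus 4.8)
The plan is to isolate which of the two conditions defining a state actually carries content. Taking $B=1_{\mathcal{A}}=A_0$ in \eqref{Gibbs state as trace}, the diagonal of $K_{\q}$ consists entirely of entries $\q^{\partial(x,x)}=\q^0=1$, so $\operatorname{tr}(K_{\q})=|X|$ and $\varphi_{\q}(1_{\mathcal{A}})=1$ holds automatically, for every value of $\q$; linearity of $\varphi_{\q}$ is clear as well. Hence $\varphi_{\q}$ is a state precisely when $\varphi_{\q}(B^*B)\geqslant 0$ for all $B\in\mathcal{A}(\Gamma)$, and it is only this inequality that I would compare against positive semidefiniteness of $K_{\q}$.

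To make the comparison transparent, I would pass to the primitive idempotents $E_0,E_1,\dots,E_d$ of the commutative algebra $\mathcal{A}(\Gamma)$, i.e., the orthogonal projections onto the eigenspaces of $A$. These satisfy $E_iE_j=\delta_{ij}E_i$, $E_i^*=E_i$, and $\operatorname{tr}(E_i)=m_i>0$, and they diagonalize every element of $\mathcal{A}(\Gamma)$ simultaneously. Writing $K_{\q}=\sum_i\kappa_iE_i$ and an arbitrary $B=\sum_i\lambda_iE_i$, one has $B^*B=\sum_i|\lambda_i|^2E_i$, and substituting into \eqref{Gibbs state as trace} and using $E_iE_j=\delta_{ij}E_i$ collapses everything to the single identity
\[
	\varphi_{\q}(B^*B)=\frac{1}{|X|}\sum_{i=0}^{d}\kappa_i|\lambda_i|^2m_i.
\]

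Both directions now fall out of this formula, together with the elementary fact that $K_{\q}$, being a real symmetric matrix in $\mathcal{A}(\Gamma)$, is positive semidefinite if and only if all the eigenvalues $\kappa_0,\dots,\kappa_d$ are nonnegative. If every $\kappa_i\geqslant 0$, the right-hand side is a sum of nonnegative terms, so $\varphi_{\q}(B^*B)\geqslant 0$ and $\varphi_{\q}$ is a state. Conversely, if $\varphi_{\q}$ is a state, I would test positivity on $B=E_i$, for which $B^*B=E_i$ and the formula reduces to $\varphi_{\q}(E_i)=\kappa_im_i/|X|\geqslant 0$; since $m_i>0$, this forces $\kappa_i\geqslant 0$ for each $i$, so $K_{\q}$ is positive semidefinite.

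There is no serious obstacle here; the argument is short once the spectral picture is in place. The points needing care are recalling that the commutativity of $\mathcal{A}(\Gamma)$ is exactly what allows one spectral decomposition to diagonalize $K_{\q}$ and every $B^*B$ at once, and choosing the idempotents $E_i$ (rather than a general $B$) as the test elements in the ``only if'' direction, which is what converts functional positivity into eigenvalue nonnegativity. An alternative route for the ``if'' direction, avoiding idempotents entirely, is to take the symmetric square root $C=K_{\q}^{1/2}$ and write $\operatorname{tr}(K_{\q}B^*B)=\operatorname{tr}\bigl((BC)^*(BC)\bigr)\geqslant 0$ using $C^*=C$ and the cyclic invariance of the trace.
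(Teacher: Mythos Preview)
Your argument is correct. The paper itself does not spell out a proof: it simply states that the lemma follows ``immediately'' from the trace formula~\eqref{Gibbs state as trace}, so your write-up is precisely the natural elaboration of what the authors leave implicit, carried out via the spectral decomposition of the commutative algebra $\mathcal{A}(\Gamma)$.
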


\noindent
It also follows from \eqref{3-term recurrence}--\eqref{Gibbs state as trace} that the mean and the variance of the random variable $A$ in the Gibbs state $\varphi_{\q}$ are given respectively by (cf.~\cite[Lemma 3.25]{HO2007B})
\begin{gather}\label{mean, variance}
	\varphi_{\q}(A)=\q k, \qquad \Sigma_{\q}^2(A)=\varphi_{\q}\big((A-\q kI)^2\big)=k(1-\q)(1+\q+\q a_1).
\end{gather}

In view of Lemma~\ref{when Gibbs state is state}, we consider the following subset of $\mathbb{R}$:
\begin{gather}\label{pi}
	\pi(\Gamma)=\{\q\in\mathbb{R}\colon K_{\q} \ \text{is positive semidefinite} \}.
\end{gather}
We always have $0,1\in\pi(\Gamma)$, so that $\pi(\Gamma)\ne\varnothing$.
Moreover, by looking at the $2\times 2$ principal submatrices of $K_{\q}$, it follows that
\begin{gather*}
	\pi(\Gamma)\subset [-1,1].
\end{gather*}

With respect to the base vertex $o\in X$, we have the following \emph{quantum decomposition} of $A$:
\begin{gather*}
	A=A^++A^-+A^{\circ},
\end{gather*}
where
\begin{gather*}
	(A^{\epsilon})_{x,y}=\begin{cases} 1 & \text{if} \ x\sim y, \ \partial(x,o)=\partial(y,o)+i_{\epsilon}, \\ 0 & \text{otherwise}, \end{cases} \qquad x,y\in X
\end{gather*}
for $\epsilon\in\{+,-,\circ\}$, with $i_+=1$, $i_-=-1$, and $i_{\circ}=0$.
The matrices $A^+$, $A^-$, and $A^{\circ}$ are called the \emph{quantum components} of $A$ with respect to $o$.
Consider the $\mathbb{C}$-vector space
\begin{gather}\label{primary module}
	W(\Gamma)=\operatorname{span}\{\Phi_0,\Phi_1,\dots,\Phi_d\},
\end{gather}
where the $\Phi_i$ are the unit column vectors given by
\begin{gather*}
	\Phi_i=\frac{1}{\sqrt{k_i}}A_i\hat{o}, \qquad i=0,1,\dots,d.
\end{gather*}
Note that $\Phi_0=\hat{o}$.
With this notation, we have
\begin{gather}\label{Gibbs state in primary module}
	\varphi_{\q}(B)=\sum_{i=0}^d \q^i \sqrt{k_i} \langle \Phi_i,B\Phi_0\rangle, \qquad B\in\mathcal{A}(\Gamma).
\end{gather}
It follows from \eqref{3-term recurrence} and \eqref{k_i} that
\begin{gather}\label{actions of quantum components}
	A^+\Phi_i=\sqrt{c_{i+1}b_i}\Phi_{i+1}, \qquad A^-\Phi_i=\sqrt{c_ib_{i-1}}\Phi_{i-1}, \qquad A^{\circ}\Phi_i=a_i\Phi_i
\end{gather}
for $i=0,1,\dots,d$, where $\sqrt{c_{d+1}b_d}\Phi_{d+1}=\sqrt{c_0b_{-1}}\Phi_{-1}:=0$.
In particular, we observe that the actions of these quantum components on $W(\Gamma)$ are independent of the base vertex $o\in X$.

\begin{Remark}\label{Terwilliger algebra}
The subalgebra $\tilde{\mathcal{A}}(\Gamma)$ of $M_X(\mathbb{C})$ generated by the quantum components $A^+$, $A^-$, and $A^{\circ}$ of $A$ is non-commutative unless $|X|=1$, and is contained in the \emph{Terwilliger algebra} of~$\Gamma$ with respect to $o$; cf.~\cite{Terwilliger1992JAC,Terwilliger1993JACa,Terwilliger1993JACb}.
See \cite{TZ2019JCTA} for discussions on when the two algebras are equal.
The space $W(\Gamma)$ is an irreducible module of the Terwilliger algebra, called the \emph{primary module}.
\end{Remark}

We now recall the QCLT for a growing distance-regular graph in the Gibbs state $\varphi_{\q}$ established in \cite[Section~3.4]{HO2007B}.
For the rest of this paper, let $\Lambda$ be an infinite directed set, and let $(\Gamma_{\lambda})_{\lambda\in\Lambda}$ be a net of distance-regular graphs; see, e.g., \cite[Chapter 2]{Kelley1975B}.
To simplify the notation, we will mostly omit the subscript ``$\lambda$''.
We will view $X$, $d$, $k$, $a_i$, $b_i$, $c_i$, etc., as functions of $\Gamma$.
The scalar $\q\in\pi(\Gamma)$ is chosen and fixed for each of the $\Gamma$ so that the variance $\Sigma_{\q}^2(A)>0$ (cf.~\eqref{mean, variance}), and we will also think of $\q$ as a function of $\Gamma$.
In this paper we are mainly interested in limit distributions with infinite supports, and hence we will assume that
\begin{gather}\label{d tends to infinity}
	d\rightarrow\infty.
\end{gather}
(That is, $\lim\limits_{\lambda\in\Lambda}d(\Gamma_{\lambda})=\infty$.)

In view of \eqref{mean, variance}, we work with the following normalization when taking the limit:
\begin{gather}\label{normalized variable}
	\frac{A-\q kI}{\Sigma_{\q}(A)}=\tilde{A}^++\tilde{A}^-+\tilde{A}^{\circ},
\end{gather}
where
\begin{gather*}
	\tilde{A}^{\pm}=\frac{A^{\pm}}{\Sigma_{\q}(A)}, \qquad \tilde{A}^{\circ}=\frac{A^{\circ}-\q kI}{\Sigma_{\q}(A)}.
\end{gather*}
From \eqref{actions of quantum components} it follows that
\begin{gather*}
	\tilde{A}^+\Phi_i=\sqrt{\overline{\omega}_{i+1}}\Phi_{i+1}, \qquad \tilde{A}^-\Phi_i=\sqrt{\overline{\omega}_i}\Phi_{i-1}, \qquad \tilde{A}^{\circ}\Phi_i=\overline{\alpha}_{i+1}\Phi_i
\end{gather*}
for $i=0,1,\dots,d$, where $\sqrt{\overline{\omega}_{d+1}}\Phi_{d+1}=\sqrt{\overline{\omega}_0}\Phi_{-1}:=0$, and
\begin{gather*}
	\overline{\omega}_i=\frac{c_ib_{i-1}}{\Sigma_{\q}^2(A)}, \qquad i=1,2,\dots,d, \qquad \overline{\alpha}_i=\frac{a_{i-1}-\q k}{\Sigma_{\q}(A)}, \qquad i=1,2,\dots,d+1.
\end{gather*}
We also define the scalar $\overline{\gamma}_i$ by (cf.~\eqref{Gibbs state in primary module})
\begin{gather*}
	\overline{\gamma}_i=\q^i\sqrt{k_i}, \qquad i=0,1,\dots,d.
\end{gather*}

Consider the following limits:
\begin{gather*}
	\overline{\omega}_i\rightarrow \omega_i, \qquad \overline{\alpha}_i\rightarrow\alpha_i, \qquad i=1,2,\dots, \qquad \overline{\gamma}_i \rightarrow\gamma_i, \qquad i=0,1,\dots.
\end{gather*}
These limits do not necessarily exist in general, and we impose the following:

\begin{Assumption}\label{condition (DR)}
With the above situation, we assume that the limits $\omega_i$, $\alpha_i$, and $\gamma_i$ exist and that $\omega_i>0$ for all $i$.
We note that $\gamma_0=1$.
\end{Assumption}

With reference to Assumption \ref{condition (DR)}, let $\mathcal{W}$ be an infinite-dimensional $\mathbb{C}$-vector space with a~fixed basis $\{\Psi_i\colon i=0,1,\dots\}$, where we equip $\mathcal{W}$ with the Hermitian inner product $\langle \cdot,\cdot\rangle$ for which the $\Psi_i$ are orthonormal.
We define the linear operators $B^+$, $B^-$, and $B^{\circ}$ on $\mathcal{W}$ by
\begin{gather*}
	B^+\Psi_i=\sqrt{\omega_{i+1}}\Psi_{i+1}, \qquad B^-\Psi_i=\sqrt{\omega_i}\Psi_{i-1}, \qquad B^{\circ}\Psi_i=\alpha_{i+1}\Psi_i
\end{gather*}
for $i=0,1,\dots$, where $\sqrt{\omega_{-1}}\Psi_{-1}:=0$.
Note that $B^+$ and $B^-$ are adjoints of each other.
The quadruple $(\mathcal{W},\{\Psi_i\},B^+,B^-)$ is called the \emph{interacting Fock space} (of one mode) \emph{associated with Jacobi sequence} $\{\omega_i\}$.

Recall the non-commutative algebra $\tilde{\mathcal{A}}(\Gamma)$ from Remark \ref{Terwilliger algebra}, and observe that $\tilde{A}^+$, $\tilde{A}^-$, and~$\tilde{A}^{\circ}$ generate $\tilde{\mathcal{A}}(\Gamma)$.
We now extend the domain of the Gibbs state $\varphi_{\q}$ to $\tilde{\mathcal{A}}(\Gamma)$; cf.~\eqref{Gibbs state in primary module}.
This extension is again independent of the base vertex $o\in X$, but it should be remarked that it may fail to be a state on $\tilde{\mathcal{A}}(\Gamma)$ (though it is indeed a state on $\mathcal{A}(\Gamma)$ by Lemma \ref{when Gibbs state is state}).
The QCLT in the Gibbs state is stated as follows:

\begin{Theorem}[{\cite[Theorem 3.29]{HO2007B}}]\label{QCLT}
With reference to Assumption $\ref{condition (DR)}$, we have
\begin{gather*}
	\varphi_{\q}\big(\tilde{A}^{\epsilon_m}\cdots\tilde{A}^{\epsilon_1}\big) \rightarrow \sum_{i=0}^{\infty} \gamma_i \langle \Psi_i,B^{\epsilon_m}\cdots B^{\epsilon_1}\Psi_0\rangle
\end{gather*}
for any $\epsilon_1,\dots,\epsilon_m\in\{+,-,\circ\}$ and $m=1,2,\dots$.
\end{Theorem}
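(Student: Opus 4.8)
The plan is to collapse each side to a single ``path'' coefficient and then appeal to the termwise convergence supplied by Assumption~\ref{condition (DR)}. I would start from the extension of \eqref{Gibbs state in primary module} to $\tilde{\mathcal{A}}(\Gamma)$, which reads
\begin{gather*}
	\varphi_{\q}\big(\tilde{A}^{\epsilon_m}\cdots\tilde{A}^{\epsilon_1}\big)=\sum_{i=0}^d \overline{\gamma}_i\,\big\langle \Phi_i,\tilde{A}^{\epsilon_m}\cdots\tilde{A}^{\epsilon_1}\Phi_0\big\rangle,
\end{gather*}
and exploit the decisive structural fact that each of $\tilde{A}^+$, $\tilde{A}^-$, $\tilde{A}^{\circ}$ sends a single (orthonormal) basis vector $\Phi_j$ to a scalar multiple of a single basis vector, namely $\Phi_{j+1}$, $\Phi_{j-1}$, or $\Phi_j$. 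Writing the partial sums $s_0=0$ and $s_\ell=s_{\ell-1}+i_{\epsilon_\ell}$, it follows that $\tilde{A}^{\epsilon_m}\cdots\tilde{A}^{\epsilon_1}\Phi_0$ is a scalar multiple of the single vector $\Phi_{s_m}$ as long as the path $s_0,s_1,\dots,s_m$ stays inside $\{0,1,\dots,d\}$, and vanishes the moment it leaves this range, since $\tilde{A}^-\Phi_0=0$ at the floor and $\sqrt{\overline{\omega}_{d+1}}\Phi_{d+1}=0$ at the ceiling.

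Therefore exactly one term of the sum survives. Reading the coefficients off the actions $\tilde{A}^+\Phi_i=\sqrt{\overline{\omega}_{i+1}}\Phi_{i+1}$, $\tilde{A}^-\Phi_i=\sqrt{\overline{\omega}_i}\Phi_{i-1}$, $\tilde{A}^{\circ}\Phi_i=\overline{\alpha}_{i+1}\Phi_i$, I would set $\overline{\lambda}_+(s)=\sqrt{\overline{\omega}_{s+1}}$, $\overline{\lambda}_-(s)=\sqrt{\overline{\omega}_{s}}$, $\overline{\lambda}_{\circ}(s)=\overline{\alpha}_{s+1}$ and record
\begin{gather*}
	\varphi_{\q}\big(\tilde{A}^{\epsilon_m}\cdots\tilde{A}^{\epsilon_1}\big)=\overline{\gamma}_{s_m}\prod_{\ell=1}^m \overline{\lambda}_{\epsilon_\ell}(s_{\ell-1})
\end{gather*}
whenever the path stays in $\{0,\dots,d\}$, the value being $0$ otherwise. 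The identical computation on the interacting Fock space $\mathcal{W}$, which now has no ceiling, gives
\begin{gather*}
	\sum_{i=0}^{\infty}\gamma_i\,\big\langle \Psi_i,B^{\epsilon_m}\cdots B^{\epsilon_1}\Psi_0\big\rangle=\gamma_{s_m}\prod_{\ell=1}^m \lambda_{\epsilon_\ell}(s_{\ell-1}),
\end{gather*}
with $\lambda_{\epsilon}$ built from $\omega$ and $\alpha$ in the same way, again equal to $0$ precisely when the path dips below $0$.

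It then remains only to pass to the limit. Because $m$ and the word $\epsilon_1,\dots,\epsilon_m$ are fixed, every index appearing above lies in $\{0,1,\dots,m\}$, and by \eqref{d tends to infinity} we have $d(\Gamma_\lambda)\geqslant m$ for all sufficiently advanced $\lambda$; for such $\lambda$ the ceiling is never reached, so the finite-graph coefficient is given by exactly the same formal product as its Fock-space counterpart, differing only in that $\overline{\omega},\overline{\alpha},\overline{\gamma}$ stand where $\omega,\alpha,\gamma$ do. When the path leaves $[0,\infty)$ both quantities are $0$ and there is nothing to prove; otherwise the convergence is that of a fixed finite product of convergent scalars, using $\overline{\omega}_i\to\omega_i$, $\overline{\alpha}_i\to\alpha_i$, $\overline{\gamma}_i\to\gamma_i$ from Assumption~\ref{condition (DR)} together with the continuity of $\sqrt{\cdot\,}$ at each $\omega_i>0$. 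I expect the only genuine care to lie in the boundary bookkeeping—verifying that the ceiling at $d$ becomes irrelevant once $d\geqslant m$, so that the single surviving path matches on both sides—while the analytic step is merely continuity of a finite product.
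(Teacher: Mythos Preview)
Your argument is correct. The key structural observation---that each normalized quantum component sends $\Phi_j$ to a scalar multiple of a single $\Phi_{j'}$, so that $\tilde{A}^{\epsilon_m}\cdots\tilde{A}^{\epsilon_1}\Phi_0$ collapses to a scalar times $\Phi_{s_m}$ (or zero)---is exactly what makes the left-hand side a finite product of the $\overline{\omega}$'s, $\overline{\alpha}$'s, and a single $\overline{\gamma}_{s_m}$, and the passage to the limit is then immediate from Assumption~\ref{condition (DR)} and $d\to\infty$.

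Note, however, that the paper itself does not give a proof of Theorem~\ref{QCLT}: it is quoted verbatim from \cite[Theorem~3.29]{HO2007B}, so there is no ``paper's own proof'' to compare against here. Your write-up is essentially the standard ladder-operator argument one finds in \cite{HO2007B}; the only small quibble is a typo in the paper's conventions (it writes $\sqrt{\omega_{-1}}\Psi_{-1}:=0$ where $\sqrt{\omega_0}\Psi_{-1}:=0$ is meant), but you have correctly read the intent that $B^-\Psi_0=0$.
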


\begin{Remark}\label{limit distribution}
There exists a Borel probability measure $\mu_{\infty}$ on $\mathbb{R}$ such that (cf.~\eqref{distribution})
\begin{gather*}
	\sum_{i=0}^{\infty} \gamma_i \langle \Psi_i,(B^++B^-+B^{\circ})^m\Psi_0\rangle=\int_{-\infty}^{+\infty} \!\! \xi^m\mu_{\infty}({\rm d}\xi), \qquad m=1,2,\dots.
\end{gather*}	
This $\mu_{\infty}$ is called the \emph{asymptotic normalized spectral distribution} of $A$ in the Gibbs state, and we are interested in finding and describing it.
See Section~\ref{sec: description of limit distributions}.
\end{Remark}

We end this section with some comments.
In \cite[Section~3.4]{HO2007B}, Hora and Obata also considered the case when $\omega_m=0$ for some $m$, so that the probability measure $\mu_{\infty}$ above has finite support.
However, if we stick to the case when $\omega_i>0$ for all $i$ as in Assumption \ref{condition (DR)}, then assuming the existence of the $\gamma_i$ turns out to be redundant.
To be more precise, we show the following:

\begin{Proposition}\label{remark on QCLT}
Suppose that the $\omega_i$ and the $\alpha_i$ exist and that $\omega_i>0$ for all $i$.
Then the $\gamma_i$ exist as well.
In particular, Theorem $\ref{QCLT}$ holds true under this weaker assumption.
\end{Proposition}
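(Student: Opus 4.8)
The plan is to prove the existence of the $\gamma_i$ by induction on $i$, built around a single three-term relation for the $\overline\gamma_i$. Write $\sigma=\Sigma_{\q}(A)$. The base case is immediate: from $a_0=0$, $c_1=1$, $b_0=k$ we get $\overline\alpha_1=-\q k/\sigma$ and $\overline\omega_1=k/\sigma^2$, hence
\[ \overline\gamma_1=\q\sqrt{k}=-\frac{\overline\alpha_1}{\sqrt{\overline\omega_1}}\to-\frac{\alpha_1}{\sqrt{\omega_1}}, \]
so $\gamma_1$ exists (and $\gamma_0=1$). The inductive step would use the identity
\[ \overline\gamma_{i+1}\sqrt{\overline\omega_{i+1}}=\frac{\q b_i}{\sigma}\,\overline\gamma_i, \]
which follows from \eqref{k_i} because $k_{i+1}c_{i+1}b_i=k_ib_i^2$. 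Thus, granted $\overline\gamma_i\to\gamma_i$, it suffices to show that $\tau_i:=\q b_i/\sigma$ converges.

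Writing $b_i=k-a_i-c_i$ and using $a_i-\q k=\sigma\overline\alpha_{i+1}$ together with $c_ib_{i-1}=\sigma^2\overline\omega_i$, one obtains
\[ \tau_i=-\overline\alpha_1-\q\,(\overline\alpha_{i+1}-\overline\alpha_1)-\q^2\,\frac{\sqrt{\overline\omega_i}\,\overline\gamma_{i-1}}{\overline\gamma_i}. \]
Since the $\overline\alpha$'s and (by the inductive hypothesis) the last fraction converge, the whole problem reduces to convergence of the two scalars $\q$ and $\q^2$; equivalently, since $\q=\overline\gamma_1/\sqrt{k}$ and $\sigma^2=k/\overline\omega_1$, to convergence of the valency $k$ in $\{1,2,\dots\}\cup\{\infty\}$. (In the degenerate case $\gamma_i=0$ one avoids the division by working with $\tau_{i-1}\cdot(\q c_i/\sigma)=\q^2\overline\omega_i$ and the bound $\tau_{i-1}=\overline\gamma_i\sqrt{\overline\omega_i}/\overline\gamma_{i-1}$.)

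To control $\q$ I would first record the limits that hold unconditionally, all coming from the base data:
\[ \q\sigma\to L:=-\frac{\alpha_1}{\omega_1}, \qquad \q^2k=(\q\sigma)(-\overline\alpha_1)\to\frac{\alpha_1^2}{\omega_1}, \qquad \q a_1=\q^2k+(\q\sigma)\overline\alpha_2\to\mu:=\frac{\alpha_1(\alpha_1-\alpha_2)}{\omega_1}. \]
Feeding these into the variance formula written as $\sigma^2/k=(1-\q)(1+\q+\q a_1)\to1/\omega_1$ and expanding, one finds that along any subnet on which $\q\to q_\infty$ (such subnets exist since $\q\in[-1,1]$) the limit satisfies the quadratic
\[ q_\infty^2+\mu q_\infty-\nu=0, \qquad \nu:=1+\mu-\frac1{\omega_1}. \]

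The step I expect to be the \emph{main obstacle} is upgrading these subnet statements to convergence of the whole net, and it can be approached from either side. Substituting $q_\infty^2=\nu-\mu q_\infty$ into the limit of the formula for $\tau_i$ and using the first-order identity $(\alpha_2-\alpha_1)\gamma_1=\mu\sqrt{\omega_1}$—which is forced by $\gamma_1=-\alpha_1/\sqrt{\omega_1}$ and the value of $\mu$—makes the term linear in $q_\infty$ cancel at the first step, leaving the $q_\infty$-free recurrence
\[ \gamma_{i+1}\sqrt{\omega_{i+1}}=-\alpha_1\gamma_i-\nu\sqrt{\omega_i}\,\gamma_{i-1}; \]
the heart of the matter is to prove that this cancellation persists for every $i$, so that the candidate limits depend on $\alpha$ and $\omega$ alone and hence agree along all subnets. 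Alternatively, when $\alpha_1\ne0$ one pins $\lim\q$ outright: $\q=\overline\gamma_1/\sqrt k$ eventually has the sign of $\gamma_1\ne0$, so every $q_\infty$ is the root of the quadratic of that sign, unique once $\nu>0$; the case $\alpha_1=0$ forces $\q\to0$ (since $k/\sigma^2\to\omega_1>0$ excludes $q_\infty\ne0$), giving $\gamma_i=0$ for $i\ge1$. Either route yields convergence of the $\overline\gamma_i$, so that Theorem \ref{QCLT} holds under the stated weaker hypothesis.
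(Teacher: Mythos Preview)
Your proposal correctly reduces the problem to convergence of $\q$ (equivalently, of the valency $k$), but both of your routes are left incomplete, and the first one cannot be repaired.

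For the cancellation route, you verify the $q_\infty$-free recurrence only at $i=1$; for $i\geq 2$ the required identity $(\alpha_{i+1}-\alpha_1)\gamma_i=\mu\sqrt{\omega_i}\,\gamma_{i-1}$ is a genuine constraint on the limits that is \emph{not} implied by the hypotheses. For instance, in the regime $q_\infty=0$ (so $\nu=0$) the case $i=2$ unwinds to $\alpha_3-\alpha_1=(\alpha_2-\alpha_1)\,\omega_2/\omega_1$, which already fails for the Grassmann limits of Section~\ref{subsec: Grassmann} (there $\alpha_i-\alpha_1$ is proportional to $\gauss{i-1}{1}$ while $\omega_2/\omega_1=c_2=\gauss{2}{1}(1+\b)\neq\gauss{2}{1}$).

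For the sign route, ``unique once $\nu>0$'' is the crux, but $\nu>0$ is never verified---and it is false in the most common situation: whenever $k\to\infty$ along a subnet one has $\q\to 0$ along it, whence $\nu=q_\infty^2+\mu q_\infty=0$. The missing ingredient is the elementary observation that $\mu=\lim \q a_1$ carries the sign of $\gamma_1$ (because $a_1\geq 0$ and $\q$ eventually has the sign of $\gamma_1\ne 0$). With $\mu\gamma_1\geq 0$ in hand one checks that the quadratic $x^2+\mu x-\nu=0$ has at most one root of the sign of $\gamma_1$ (or both roots equal $0$), which pins down $\lim\q$ and lets your induction run. So this route is salvageable, but not as written.

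The paper's argument is of an entirely different nature. It invokes the Bannai--Ito conjecture (Theorem~\ref{BI conjecture}): since $d\to\infty$ and there are only finitely many distance-regular graphs of each fixed valency $k\geq 3$, the only possible accumulation points of $1/k$ are $0$ and $1/2$ (Claim~\ref{two possible accumulation points}). The two cases $k=2$ eventually (cycles) and $k\to\infty$ are then handled directly (Claims~\ref{case 1/2} and~\ref{case 0}), and shown not to coexist because they force incompatible values of $\omega_2$ (Claim~\ref{exactly one accumulation point}). Your approach---once the sign argument is repaired as above---would yield an independent proof avoiding this deep classification result.
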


Proposition \ref{remark on QCLT} is a consequence of Claims \ref{two possible accumulation points}--\ref{exactly one accumulation point} below.
For the rest of this section, we assume the existence of the $\omega_i>0$ and that of the $\alpha_i$.

First, observe that
\begin{gather}\label{omega_1 exists}
	(1-\q)(1+\q+\q a_1)=\frac{\Sigma_{\q}^2(A)}{k}=\frac{1}{\overline{\omega}_1} \rightarrow \frac{1}{\omega_1}>0.
\end{gather}
Since $a_0=0$, the existence of $\gamma_1$ follows from that of $\alpha_1$ and \eqref{omega_1 exists}: $\gamma_1=-\alpha_1/\sqrt{\omega_1}$.
Note that if $k=2$ then $\Gamma$ is either the $2d$-cycle or the $(2d+1)$-cycle.
Bang, Dubickas, Koolen, and Moulton~\cite{BDKM2015AM} proved the \emph{Bannai--Ito conjecture}:

\begin{Theorem}[{\cite{BDKM2015AM}}]\label{BI conjecture}
There exist only finitely many distance-regular graphs for each fixed degree $k\geqslant 3$.
\end{Theorem}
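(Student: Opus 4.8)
The plan is to reduce the statement to a single quantitative claim: that the diameter $d$ is bounded above by a function of the valency $k$. This reduction is straightforward. For a distance-regular graph one has $|X|=\sum_{i=0}^{d}k_i$, and \eqref{k_i} together with $b_0=k$ and $c_i\geqslant 1$ gives $k_i\leqslant k(k-1)^{i-1}$; hence $|X|$ is bounded by a function of $k$ and $d$ alone. Since there are only finitely many graphs on a bounded number of vertices (and only finitely many feasible intersection arrays once $k$ and $d$ are fixed), it suffices to exhibit a function $f$ with $d\leqslant f(k)$ for every distance-regular graph of valency $k\geqslant 3$. Everything therefore hinges on bounding the diameter.

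To attack the diameter I would first invoke the standard monotonicity of the intersection numbers: for any distance-regular graph the sequence $c_1\leqslant c_2\leqslant\cdots\leqslant c_d$ is non-decreasing and $b_0\geqslant b_1\geqslant\cdots\geqslant b_{d-1}$ is non-increasing \cite{BCN1989B}. As all of these integers lie in $\{1,\dots,k\}$, each sequence changes value at most $k$ times, so for large $d$ the graph splits into a ``head'' of bounded length, on which the parameters vary, followed by a long ``tail'' on which $b_i$ and $c_i$ (hence $a_i$) are essentially constant. The problem is thus reduced to forbidding long stretches of (nearly) constant intersection numbers at fixed valency, which is exactly the regime where the known infinite families --- whose valency grows with $d$ --- fail to supply examples.

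The genuinely hard input is spectral and arithmetic. The $d+1$ distinct eigenvalues $k=\theta_0>\theta_1>\cdots>\theta_d$ are the zeros of the characteristic polynomial of the tridiagonal intersection matrix assembled from the $a_i$, $b_i$, $c_i$; they are totally real algebraic integers lying in $[-k,k]$, with multiplicities $m_i$ summing to $|X|$. I would combine two families of constraints. On the structural side, multiplicity bounds relating $k$ and $d$ to the smallest nontrivial $m_i$, together with Terwilliger-type inequalities limiting how fast the $c_i$ may grow, pin down the local shape of the intersection array. On the number-theoretic side, a long constant tail forces the eigenvalues to cluster densely in a fixed interval as totally real algebraic integers of controlled size, which is incompatible with their being the full set of conjugate roots of a monic integer polynomial unless $d$ is small. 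Balancing these two pressures yields the desired bound $d\leqslant f(k)$.

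I expect the main obstacle to be precisely this coupling. Neither the monotonicity of the intersection array nor any single eigenvalue inequality bounds the diameter on its own, and the constant-tail regime is exactly where one must rule out hypothetical exotic graphs rather than reduce to known structures. Carrying it out requires the delicate quantitative interplay between interlacing/multiplicity estimates and sharp bounds on totally real algebraic integers in an interval --- the technical core of \cite{BDKM2015AM} --- and it is here, rather than in any of the soft reductions above, that the real difficulty lies.
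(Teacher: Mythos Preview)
The paper does not prove this theorem at all: it is quoted verbatim from \cite{BDKM2015AM} and used as a black box (to force either $k=2$ eventually or $k\to\infty$). There is therefore no ``paper's own proof'' to compare against.

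Your proposal is not a proof either, and you say so yourself: the reduction to a diameter bound via $|X|\leqslant 1+k\sum_{i=1}^d(k-1)^{i-1}$ is correct and standard, and the outline you give (monotonicity of the $b_i$, $c_i$ forcing a long constant tail, then spectral/arithmetic constraints on totally real algebraic integers to rule out that tail) is an accurate high-level description of the strategy in \cite{BDKM2015AM}. But the entire content of the theorem lies in the step you label ``the technical core of \cite{BDKM2015AM}'' and explicitly do not carry out. That is not a gap in your reasoning so much as an acknowledgment that this is a deep external result; since the paper treats it the same way, citing \cite{BDKM2015AM} without further argument is exactly what is called for here.
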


Since we are letting $d\rightarrow\infty$ (cf.~\eqref{d tends to infinity}), it follows that

\begin{Claim}\label{two possible accumulation points}
If $\xi$ is an accumulation point of $1/k\in(0,1/2]$, then $\xi\in\{0,1/2\}$.
\end{Claim}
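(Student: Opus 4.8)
The plan is to combine the discreteness of the range of $1/k$ with Theorem~\ref{BI conjecture} and the standing assumption~\eqref{d tends to infinity}. First I would record that, since each graph is connected and \eqref{d tends to infinity} forces $d\geqslant 2$ eventually, we have $k=k(\Gamma)\geqslant 2$, so that $1/k$ ranges over the set $S=\{1/m\colon m\in\mathbb{Z},\ m\geqslant 2\}\subset(0,1/2]$. The point of departure is that $S$ has $0$ as its unique accumulation point in $\mathbb{R}$; equivalently, $S$ is closed and discrete as a subset of $(0,\infty)$, so each $1/m$ is isolated in $S$.

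Next I would reduce the claim to a statement about a single degree. Suppose $\xi$ is an accumulation point of the net $\bigl(1/k(\Gamma_\lambda)\bigr)_{\lambda\in\Lambda}$ with $\xi>0$. If $\xi\notin S$, then, since $S$ is closed in $(0,\infty)$, some neighborhood of $\xi$ avoids $S$ entirely; as every value $1/k(\Gamma_\lambda)$ lies in $S$, no subnet could approach $\xi$, contradicting that $\xi$ is an accumulation point. Hence $\xi=1/m$ for some integer $m\geqslant 2$. Using that $1/m$ is isolated in $S$, a sufficiently small neighborhood of $\xi$ meets $S$ only in $\{1/m\}$, and so the accumulation-point property upgrades to the assertion that $k(\Gamma_\lambda)=m$ for cofinally many $\lambda$.

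The hard part, though it is essentially handed to us by the Bannai--Ito theorem, is to rule out $m\geqslant 3$. Suppose $m\geqslant 3$ and $k(\Gamma_\lambda)=m$ cofinally. By Theorem~\ref{BI conjecture} there are only finitely many distance-regular graphs of degree $m$, so the diameters occurring among them are bounded by some integer $N$. On the other hand, \eqref{d tends to infinity} supplies $\lambda_0\in\Lambda$ with $d(\Gamma_\lambda)>N$ for all $\lambda\geqslant\lambda_0$. Here is where directedness of $\Lambda$ enters: since $k=m$ holds cofinally, I can choose $\lambda\geqslant\lambda_0$ with $k(\Gamma_\lambda)=m$; this $\Gamma_\lambda$ is then one of the finitely many degree-$m$ graphs and hence satisfies $d(\Gamma_\lambda)\leqslant N$, contradicting $d(\Gamma_\lambda)>N$. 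Thus the only positive accumulation point is $\xi=1/m$ with $m=2$, that is, $\xi=1/2$, and since $\xi=0$ is also permitted we conclude $\xi\in\{0,1/2\}$.

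I expect no serious obstacle beyond correctly invoking Theorem~\ref{BI conjecture}: the degree $k=2$ (the cycles) is exactly the case that escapes its finiteness conclusion and accounts for the accumulation point $1/2$, while every $k\geqslant 3$ is eliminated by the interplay between finiteness for a fixed degree and the blow-up of the diameter. The remaining care is purely net-topological, namely passing between ``accumulation point'', ``some value attained cofinally'', and the combination of a cofinal condition with an eventual condition via directedness of $\Lambda$.
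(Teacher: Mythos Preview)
Your argument is correct and is precisely the approach the paper takes: the paper's ``proof'' is just the one sentence preceding the Claim, namely that Theorem~\ref{BI conjecture} together with $d\to\infty$ forces the conclusion, and you have simply unpacked the net-topological details (discreteness of $\{1/m\colon m\geqslant 2\}$, isolation of $1/m$, and the cofinal/eventual interplay) that the paper leaves implicit.
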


We next handle each of the two possible accumulation points of $1/k$.

\begin{Claim}\label{case 1/2}
Suppose that $1/2$ is an accumulation point of $1/k$, and consider a subnet of $(\Gamma_{\lambda})_{\lambda\in \Lambda}$ for which $k=2$ eventually.
Then the $\gamma_i$ exist on this subnet.
Moreover, we have $\omega_i=\omega_1/2$ and~$\alpha_i=\alpha_1$ for $i=2,3,\dots$.
\end{Claim}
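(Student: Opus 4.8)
The plan is to exploit the fact that, on the subnet in question, we eventually have $k=2$, which as noted above forces $\Gamma$ to be either the $2d$-cycle or the $(2d+1)$-cycle. For such a graph the intersection numbers are completely explicit: $a_i=0$ for $0\leqslant i\leqslant d-1$, while $b_0=2$ and $b_i=c_i=1$ for $1\leqslant i\leqslant d-1$, the only anomalous values being at $i=d$ (namely $b_d=0$, $c_d\in\{1,2\}$, $a_d\in\{0,1\}$). I would first record these together with their consequences: since $a_1=0$, \eqref{mean, variance} gives $\Sigma_{\q}^2(A)=2(1-\q)(1+\q)=2(1-\q^2)$, while \eqref{k_i} gives $k_i=2$ for $1\leqslant i\leqslant d-1$. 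Substituting into the definitions yields the closed forms
\begin{gather*}
\overline{\omega}_1=\frac{1}{1-\q^2}, \qquad \overline{\omega}_i=\frac{1}{2(1-\q^2)}=\frac{1}{2}\,\overline{\omega}_1 \quad (2\leqslant i\leqslant d-1), \\
\overline{\alpha}_i=\frac{-2\q}{\Sigma_{\q}(A)} \quad (1\leqslant i\leqslant d), \qquad \overline{\gamma}_i=\q^i\sqrt{2} \quad (1\leqslant i\leqslant d-1).
\end{gather*}

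The crucial step is to show that $\q$ itself converges along the subnet; this is the only point with real content. Note that $\overline{\omega}_1\to\omega_1$ alone is insufficient, since by the first display it pins down only $1-\q^2$, hence $|\q|$, leaving the sign of $\q$ undetermined. Instead I would use the existence of $\gamma_1$ established before the claim, where the exact identity $\overline{\gamma}_1=-\overline{\alpha}_1/\sqrt{\overline{\omega}_1}$ (valid since $a_0=0$) gives $\gamma_1=-\alpha_1/\sqrt{\omega_1}$. As $\overline{\gamma}_1=\q\sqrt{2}$ on the cycle subnet, convergence $\overline{\gamma}_1\to\gamma_1$ forces $\q\to\gamma_1/\sqrt{2}=:\q_\infty$. (Equivalently, the formula for $\overline{\alpha}_1$ is strictly monotonic in $\q\in(-1,1)$, so the existence of $\alpha_1$ already fixes the sign.)

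With convergence of $\q$ secured, the remaining assertions follow by the ``eventually'' principle: since $d\to\infty$, each fixed index $i$ satisfies $i\leqslant d-1$ eventually, so the generic cycle values of the intersection numbers and the closed forms above apply. Hence $\overline{\gamma}_i=\q^i\sqrt{2}\to\q_\infty^i\sqrt{2}$, which proves that every $\gamma_i$ exists; $\overline{\omega}_i=\frac{1}{2}\overline{\omega}_1\to\omega_1/2$, giving $\omega_i=\omega_1/2$; and $\overline{\alpha}_i=\overline{\alpha}_1\to\alpha_1$ (using $a_{i-1}=a_0=0$), giving $\alpha_i=\alpha_1$, for all $i\geqslant 2$.

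I expect the main obstacle to be a matter of care rather than depth: one must check that the anomalous boundary parameters at $i=d$ never enter the limits, which is guaranteed because each $i$ is held fixed while $d\to\infty$. Apart from that bookkeeping, the genuine content is precisely the sign-recovery for $\q$ in the second step; everything else is direct substitution into the explicit cycle data.
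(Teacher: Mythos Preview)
Your argument is correct and follows essentially the same route as the paper: both use the existence of $\gamma_1$ (together with $\overline{\gamma}_1=\q\sqrt{2}$ on the cycle subnet) to force convergence of $\q$, and then read off the remaining limits from the explicit cycle data $(a_i,b_i,c_i)=(0,1,1)$ and $k_i=2$ for $1\leqslant i\leqslant d-1$, noting that the anomalous values at $i=d$ are irrelevant since $d\to\infty$. Your write-up is simply more explicit than the paper's.
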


\begin{proof}
Recall that $\gamma_1$ exists.
Since $k=2$ eventually, this means that $\q$ is convergent on this subnet.
For the cycles, we have $k_i=2$, $i=1,2,\dots,d-1$.
Since $d\rightarrow\infty$, it follows that the $\gamma_i$ all exist on this subnet.
The last statement also follows from \eqref{omega_1 exists} and since the cycles satisfy $(a_i,b_i,c_i)=(0,1,1)$ for $i=1,2,\dots,d-1$.
\end{proof}

\begin{Claim}\label{case 0}
Suppose that $0$ is an accumulation point of $1/k$, and consider a subnet of $(\Gamma_{\lambda})_{\lambda\in \Lambda}$ for which $k\rightarrow\infty$.
Then the $\gamma_i$ exist on this subnet.
Moreover, we have $a_i/\sqrt{k}\rightarrow \alpha_{i+1}/\sqrt{\omega_1}+\gamma_1$, $b_i/k\rightarrow 1$, and $c_i\rightarrow\omega_i/\omega_1$ for $i=1,2,\dots$ on this subnet.
\end{Claim}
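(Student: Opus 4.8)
The plan is to read off all four assertions from the definitions of $\overline{\omega}_i$, $\overline{\alpha}_i$, $\overline{\gamma}_i$, combined with the two structural identities $a_i+b_i+c_i=k$ (see~\eqref{a+b+c=k}) and $c_ib_{i-1}=\overline{\omega}_i\Sigma_{\q}^2(A)$, and the basic asymptotics forced by $k\to\infty$ on the chosen subnet. Throughout, for each fixed $i$ we have $d\geqslant i$ eventually (since $d\to\infty$), so $a_i,b_i,c_i$ are eventually defined, and the assumed limits $\omega_i>0$, $\alpha_i$ persist on any subnet. I would first dispose of the statement about the $a_i$, which is immediate. By~\eqref{omega_1 exists} we have $\Sigma_{\q}(A)=\sqrt{k}/\sqrt{\overline{\omega}_1}$, so that $\Sigma_{\q}(A)/\sqrt{k}=1/\sqrt{\overline{\omega}_1}\to 1/\sqrt{\omega_1}$, while $\q\sqrt{k}=\overline{\gamma}_1\to\gamma_1$ (recall $\gamma_1$ exists). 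Solving the definition of $\overline{\alpha}_{i+1}$ for $a_i$ gives $a_i=\q k+\overline{\alpha}_{i+1}\Sigma_{\q}(A)$, whence
\[
	\frac{a_i}{\sqrt{k}}=\q\sqrt{k}+\overline{\alpha}_{i+1}\,\frac{\Sigma_{\q}(A)}{\sqrt{k}}\longrightarrow\gamma_1+\frac{\alpha_{i+1}}{\sqrt{\omega_1}}.
\]
Dividing once more by $\sqrt{k}\to\infty$ shows $a_i/k\to 0$, a fact I will use below.

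The core of the argument is the separation of $b_i/k$ from $c_i$, which I would carry out by a single induction on $i$. The identity $\overline{\omega}_i=c_ib_{i-1}/\Sigma_{\q}^2(A)$ together with $\Sigma_{\q}^2(A)=k/\overline{\omega}_1$ gives
\[
	\frac{c_ib_{i-1}}{k}=\frac{\overline{\omega}_i}{\overline{\omega}_1}\longrightarrow\frac{\omega_i}{\omega_1}.
\]
The \emph{only} delicate point — and the step I expect to be the real obstacle — is that convergence of the \emph{product} $c_ib_{i-1}/k$ does not by itself separate the two factors, so I must secure control of $b_{i-1}/k$ first. This is exactly what the induction provides. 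For the base case, $b_0=k$ gives $b_0/k=1$. Assuming $b_{i-1}/k\to 1$, I divide the displayed limit by $b_{i-1}/k\to 1\ne 0$ to conclude $c_i\to\omega_i/\omega_1$; in particular $c_i$ is bounded, so $c_i/k\to 0$. Substituting $a_i/k\to 0$ and $c_i/k\to 0$ into $a_i+b_i+c_i=k$ then yields $b_i/k\to 1$, closing the induction and establishing both $b_i/k\to 1$ and $c_i\to\omega_i/\omega_1$ for all $i\geqslant 1$.

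Finally, the existence of the $\gamma_i$ follows by a second, now routine, induction. From $\overline{\gamma}_i=\q^i\sqrt{k_i}$ and~\eqref{k_i} one obtains the ratio $\overline{\gamma}_i/\overline{\gamma}_{i-1}=\q\sqrt{b_{i-1}/c_i}=\overline{\gamma}_1\sqrt{(b_{i-1}/k)/c_i}$, and the three limits just established give $\overline{\gamma}_i/\overline{\gamma}_{i-1}\to\gamma_1\sqrt{\omega_1/\omega_i}$. Since $\overline{\gamma}_0=1$ and $\overline{\gamma}_1\to\gamma_1$, forming the telescoping product of these convergent ratios shows that each $\overline{\gamma}_i$ converges, so the $\gamma_i$ exist, with $\gamma_i=\gamma_1^i\prod_{j=2}^i\sqrt{\omega_1/\omega_j}$. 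This settles all the assertions of the claim.
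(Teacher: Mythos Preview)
Your proof is correct and follows essentially the same route as the paper's: first read off $a_i/\sqrt{k}\to\alpha_{i+1}/\sqrt{\omega_1}+\gamma_1$ directly from the definitions and \eqref{omega_1 exists}, then run an induction using $c_ib_{i-1}/k=\overline{\omega}_i/\overline{\omega}_1$ together with $a_i+b_i+c_i=k$ to separate $b_i/k\to 1$ from $c_i\to\omega_i/\omega_1$, and finally deduce the convergence of the $\overline{\gamma}_i$ from \eqref{k_i}. The only cosmetic difference is that the paper anchors the induction at $c_1=1=\omega_1/\omega_1$ whereas you anchor it at $b_0/k=1$; the inductive step is identical either way.
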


\begin{proof}
That $a_i/\sqrt{k}\rightarrow \alpha_{i+1}/\sqrt{\omega_1}+\gamma_1$, $i=1,2,\dots$, is immediate from~\eqref{omega_1 exists}.
We next show that $b_i/k\rightarrow 1$ and $c_i\rightarrow\omega_i/\omega_1$ for $i=1,2,\dots$ on this subnet.
Suppose by induction that $c_i\rightarrow\omega_i/\omega_1$ for some $i$.
We have $a_i=o(k)$ and $c_i=o(k)$ since $k\rightarrow\infty$, and hence $b_i/k\rightarrow 1$ by~\eqref{a+b+c=k}.
Then it follows in turn that $c_{i+1}\sim \overline{\omega_{i+1}}/\overline{\omega_1}\rightarrow\omega_{i+1}/\omega_1$.
The existence of the $\gamma_i$ on this subnet now follows from these comments, \eqref{k_i}, and the existence of $\gamma_1$.
\end{proof}

Finally, we show that the above two cases do not coexist.

\begin{Claim}\label{exactly one accumulation point}
There exists exactly one accumulation point of $1/k\in (0,1/2]$.
More precisely, we have either $k=2$ eventually, or $k\rightarrow\infty$.
\end{Claim}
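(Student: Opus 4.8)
The plan is to rule out the possibility that $0$ and $1/2$ are \emph{both} accumulation points of $1/k$, after which a compactness argument finishes the claim. By Claim~\ref{two possible accumulation points} the only candidate accumulation points are $0$ and $1/2$, and since $1/k$ takes values in the compact interval $[0,1/2]$, every net has a convergent subnet and hence at least one of these must actually occur. It therefore suffices to show that they cannot occur simultaneously.

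To this end I would exploit the fact that the limits $\omega_i$ are taken over the \emph{entire} net $\Lambda$, so that $\omega_2/\omega_1$ is a single well-defined positive real number, yet its value gets pinned down in two incompatible ways on the two kinds of subnets. Concretely, suppose toward a contradiction that both $0$ and $1/2$ are accumulation points. Passing to a subnet on which $k\rightarrow\infty$ (available because $0$ is an accumulation point), Claim~\ref{case 0} gives $c_2\rightarrow\omega_2/\omega_1$. The key observation is that each $c_2$ is a \emph{positive integer} (indeed $c_i\geqslant 1$ for $1\leqslant i\leqslant d$, and $d\rightarrow\infty$ guarantees $c_2$ is defined eventually), and a net of positive integers can converge only to a positive integer; hence $\omega_2/\omega_1\in\{1,2,3,\dots\}$. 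On the other hand, passing to a subnet on which $k=2$ eventually (available because $1/2$ is an accumulation point), Claim~\ref{case 1/2} gives $\omega_2=\omega_1/2$, i.e.\ $\omega_2/\omega_1=1/2$. Since $1/2$ is not a positive integer, this is the desired contradiction.

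With both accumulation points excluded, exactly one of $0$, $1/2$ remains, and since $[0,1/2]$ is compact a net with a unique accumulation point converges to it. If that point is $1/2$, then $1/k\rightarrow 1/2$, so $k\rightarrow 2$, and as $k$ is integer-valued this forces $k=2$ eventually; if it is $0$, then $1/k\rightarrow 0$, i.e.\ $k\rightarrow\infty$. This is precisely the stated dichotomy.

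I expect the main subtlety to lie in the integrality step: the whole argument hinges on reading off $\lim c_2=\omega_2/\omega_1$ from the $k\rightarrow\infty$ subnet and noting that an integer-valued net cannot limit to the non-integer value $1/2$ forced by the $k=2$ subnet. What makes the comparison legitimate is that $\omega_1$ and $\omega_2$ are defined as limits along the \emph{entire} net and therefore agree on every subnet, which is exactly what ties the two separate computations together.
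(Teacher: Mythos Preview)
Your argument is correct and follows the same strategy as the paper: assume both accumulation points occur, and derive incompatible values of $\omega_2/\omega_1$ from the two subnets via Claims~\ref{case 1/2} and~\ref{case 0}. The only cosmetic difference is that the paper uses merely $c_i\geqslant 1$ to conclude $\omega_i/\omega_1\geqslant 1$ (already contradicting $\omega_2/\omega_1=1/2$), whereas you invoke the slightly stronger integrality of $c_2$; both yield the same contradiction, and your added compactness wrap-up making the final dichotomy explicit is fine.
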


\begin{proof}
In view of Claim~\ref{two possible accumulation points}, suppose on the contrary that both $0$ and $1/2$ are accumulation points of $1/k$.
On the one hand, we have $\omega_i=\omega_1/2$, $i=2,3,\dots$, by Claim~\ref{case 1/2}.
On the other hand, we have $\omega_i\geqslant \omega_1$, $i=2,3,\dots$, by Claim~\ref{case 0} and since $c_i\geqslant 1$.
This is a contradiction, and the result follows.
\end{proof}

\begin{proof}[Proof of Proposition \ref{remark on QCLT}]
Immediate from Claims \ref{two possible accumulation points}--\ref{exactly one accumulation point}.
\end{proof}

The following is another important consequence of the above discussions:

\begin{Claim}\label{ci is eventually constant}
Each of the $c_i$ is eventually constant.
\end{Claim}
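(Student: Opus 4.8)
The plan is to split the argument along the dichotomy provided by Claim~\ref{exactly one accumulation point}: on our net, either $k=2$ eventually, or $k\to\infty$. So I would fix an index $i\geqslant 1$ and show that the value $c_i=c_i(\Gamma)$ stabilizes in each of the two regimes; since the regimes cannot coexist, this suffices.

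First I would treat the subcase where $k=2$ eventually. Here each $\Gamma$ is a cycle, and, as already recorded in the proof of Claim~\ref{case 1/2}, a cycle satisfies $(a_j,b_j,c_j)=(0,1,1)$ for $j=1,2,\dots,d-1$. Since $d\to\infty$ by \eqref{d tends to infinity}, for the fixed index $i$ we eventually have $i\leqslant d-1$, and hence $c_i=1$ eventually. Thus $c_i$ is eventually constant in this subcase.

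Next I would treat the subcase $k\to\infty$. By Claim~\ref{case 0} we have $c_i\to\omega_i/\omega_1$ on this subnet. The key observation is that the $c_i$ are positive integers, and a net of integers that converges must be eventually constant: once $\lambda$ is large enough that $|c_i-\omega_i/\omega_1|<1/2$, any two tail values differ by less than $1$ and so coincide. Hence $c_i$ is eventually constant (and, incidentally, the limit $\omega_i/\omega_1$ turns out to be a positive integer). Combining the two subcases gives the claim.

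The argument is short because the heavy lifting has already been done in Claims~\ref{two possible accumulation points}--\ref{exactly one accumulation point}; the only genuinely new ingredient is the elementary remark that a convergent net of integers stabilizes, which is precisely where the integrality of the intersection numbers $c_i$ is essential. I do not anticipate a serious obstacle here, but some care is needed to phrase the ``eventually'' statements correctly for \emph{nets} rather than sequences, and to recall that it is exactly the dichotomy of Claim~\ref{exactly one accumulation point} that prevents the two regimes from being interleaved along the net.
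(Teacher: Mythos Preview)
Your proposal is correct and is essentially an expanded version of the paper's own proof, which reads in its entirety: ``Follows from Claims~\ref{two possible accumulation points}--\ref{exactly one accumulation point} and since the $c_i$ are integers.'' Your case split via Claim~\ref{exactly one accumulation point}, the appeal to the cycle data from Claim~\ref{case 1/2}, the convergence $c_i\to\omega_i/\omega_1$ from Claim~\ref{case 0}, and the integrality-forces-stabilization observation are exactly the ingredients the paper is invoking.
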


\begin{proof}
Follows from Claims \ref{two possible accumulation points}--\ref{exactly one accumulation point} and since the $c_i$ are integers.
\end{proof}

\begin{Remark}
In \cite[Chapter 7]{HO2007B}, Hora and Obata extended the method of the quantum decomposition and the QCLT to more general growing regular graphs, and obtained some sufficient conditions for the theorem to hold.
See also \cite{HO2008TAMS}.
In particular, these conditions can be applied to Cayley graphs on Coxeter groups, such as the symmetric groups.
For distance-regular graphs, these conditions turn out to be reduced to the following (besides that $\Sigma_{\q}^2(A)>0$): (i) $k\rightarrow\infty$; (ii) each of the $c_i$ is eventually constant; (iii) the $a_i/\sqrt{k}$ are convergent; (iv) $\gamma_1$ exists.
See \cite[Theorem 7.14 and~Proposition 7.17]{HO2007B}.
Therefore, if we focus only on distance-regular graphs with $k\geqslant 3$, then it follows from Claims \ref{two possible accumulation points}--\ref{ci is eventually constant} that these sufficient conditions are in fact equivalent to Assumption \ref{condition (DR)} (or the existence of the $\omega_i>0$ and that of the $\alpha_i$, by virtue of Proposition~\ref{remark on QCLT}).
\end{Remark}

\section{Distance-regular graphs with classical parameters}\label{sec: DRGs with classical parameters}

A distance-regular graph $\Gamma$ with diameter $d$ is said to have \emph{classical parameters} $(d,\b,\alpha,\beta)$ (cf.~\cite[Section~6.1]{BCN1989B}) whenever the $b_i$ and the $c_i$ are expressed as
\begin{gather}\label{b and c}
	b_i=\left(\gauss{d}{1}-\gauss{i}{1}\right)\!\!\left(\beta-\alpha\gauss{i}{1}\right), \qquad c_i=\gauss{i}{1}\!\left(1+\alpha\gauss{i-1}{1}\right)
\end{gather}
for $i=0,1,\dots,d$, where
\begin{gather*}
	\gauss{i}{1}=1+\b+\b^2+\dots+\b^{i-1}
\end{gather*}
is a Gaussian binomial coefficient.
We call $\b$ the \emph{base}.
In particular,
\begin{gather}\label{k}
	k=b_0=\gauss{d}{1}\beta,
\end{gather}
and by \eqref{a+b+c=k} we have
\begin{gather}\label{a}
	a_i=\gauss{i}{1}\!\left(\beta-1+\alpha\left(\gauss{d}{1}-\gauss{i}{1}-\gauss{i-1}{1}\right)\!\right), \qquad i=0,1,\dots,d.
\end{gather}
It is known (see~\cite[Proposition 6.2.1]{BCN1989B}) that
\begin{gather*}
	\b\in\mathbb{Z}\setminus\{0,-1\} \qquad \text{if} \quad d\geqslant 3.
\end{gather*}
As mentioned in Section~\ref{sec: introduction}, all the graphs with $\b=1$ are known, and the QCLTs for them have been obtained, so our aim in this paper is to discuss the case where $\b\in\{\pm 2,\pm 3,\dots\}$.

Suppose that $\Gamma$ has classical parameters $(d,\b,\alpha,\beta)$.
By \cite[Corollary 8.4.2]{BCN1989B}, the $d+1$ distinct eigenvalues of $\Gamma$ are given by
\begin{gather}\label{eigenvalues}
	\theta_i=\frac{b_i}{\b^i}-\gauss{i}{1}=\gauss{d-i}{1}\!\left(\beta-\alpha\gauss{i}{1}\right)-\gauss{i}{1}, \qquad i=0,1,\dots,d.
\end{gather}
For $i=0,1,\dots,d$, let $E_i$ denote the orthogonal projection onto the eigenspace of $A$ for $\theta_i$.
The~$E_i$ are polynomials in $A$, and we have
\begin{gather}\label{second basis}
	\mathcal{A}(\Gamma)=\operatorname{span}\{E_0,E_1,\dots,E_d\}.
\end{gather}
Note by \eqref{k} that $\theta_0=k$.
Since $\Gamma$ is regular and connected, it follows that (cf.~\cite[p.~45]{BCN1989B})
\begin{gather}\label{E0}
	E_0=\frac{1}{|X|}J,
\end{gather}
where $J$ denotes the all-ones matrix.

Recall the set $\pi(\Gamma)$ from \eqref{pi}.
It seems to be a difficult problem to determine $\pi(\Gamma)$ in general.
For the Hamming graphs and the Johnson graphs, which have classical parameters with $\b=1$, it is shown (see~\cite[Propositions 5.16 and 6.27]{HO2007B}) that this set contains the interval $[0,1]$, as consequences of Bo\.{z}ejko's quadratic embedding test; cf.~\cite[Proposition~2.14]{HO2007B}.
For the case $\b\ne 1$, the following result again finds infinitely many elements of $\pi(\Gamma)$:

\begin{Proposition}\label{negative powers of q}
Suppose that $\Gamma$ has classical parameters $(d,\b,\alpha,\beta)$ with $d\geqslant 3$ and $\b\in\{\pm 2,\pm 3,\dots\}$.
Then $\b^{-i}\in\pi(\Gamma)$ for $i=0,1,2,\dots$.
\end{Proposition}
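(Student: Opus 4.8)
The plan is to reduce the whole infinite family of claims to the single case $i=1$, using a multiplicative structure of $\pi(\Gamma)$, and then to verify $\b^{-1}\in\pi(\Gamma)$ by diagonalizing $K_{\b^{-1}}$ against the primitive idempotents. First I would observe that, since $\partial$ is a metric, for any $s,u\in\mathbb{R}$ the entrywise (Schur, Hadamard) product of the matrices $K_s$ and $K_u$ satisfies $K_s\circ K_u=\bigl(s^{\partial(x,y)}u^{\partial(x,y)}\bigr)_{x,y}=K_{su}$. By the Schur product theorem, the entrywise product of two real symmetric positive semidefinite matrices is positive semidefinite, so $\pi(\Gamma)$ is closed under multiplication. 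As $0,1\in\pi(\Gamma)$, it then suffices to prove that $\b^{-1}\in\pi(\Gamma)$: for then $\b^{-i}=(\b^{-1})^{i}\in\pi(\Gamma)$ for every $i\geqslant 0$ (with $\b^{0}=1$). This one reduction is the crux that collapses the infinite family into a single positivity statement.

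It remains to show $K_{\b^{-1}}\succeq 0$. By \eqref{second basis}, $K_{\b^{-1}}\in\mathcal{A}(\Gamma)=\operatorname{span}\{E_0,\dots,E_d\}$, and the $E_j$ are mutually orthogonal projections onto the eigenspaces of $A$. Writing each distance matrix as $A_i=\sum_{j}p_i(\theta_j)E_j$, where $p_i(\theta_j)$ denotes the scalar by which $A_i$ acts on the $\theta_j$-eigenspace, I obtain $K_{\b^{-1}}=\sum_{j=0}^d\lambda_jE_j$ with $\lambda_j=\sum_{i=0}^d\b^{-i}p_i(\theta_j)$. Because the ranges of the $E_j$ are pairwise orthogonal, $K_{\b^{-1}}$ is positive semidefinite if and only if $\lambda_j\geqslant 0$ for every $j=0,1,\dots,d$, so everything reduces to the sign of these numbers.

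The quantities $p_i(\theta_j)$ are governed by the three-term recurrence coming from \eqref{3-term recurrence}, namely $\theta_j p_i=b_{i-1}p_{i-1}+a_ip_i+c_{i+1}p_{i+1}$ with $p_0=1$, in which $a_i,b_i,c_i$ and $\theta_j$ are the explicit Gaussian-binomial expressions of \eqref{b and c}, \eqref{a}, and \eqref{eigenvalues}. I would compute the generating sum $\lambda_j=\sum_i\b^{-i}p_i(\theta_j)$ in closed form; the $q$-geometric shape of the classical parameters (and the suggestive identity $\theta_i=b_i\b^{-i}-\gauss{i}{1}$) indicates that $\lambda_j$ should telescope into a product of factors each linear in the $\gauss{\ell}{1}$, from which nonnegativity can be read off. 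Equivalently one may pass to the cosine sequence $w_i=p_i(\theta_j)/k_i$, which satisfies $c_iw_{i-1}+a_iw_i+b_iw_{i+1}=\theta_jw_i$ and $w_0=1$, and argue by induction on $d$ (or on $j$).

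The hard part will be precisely this closed-form evaluation together with the sign analysis when the base $\b$ is negative (e.g.\ $\b=-2$), where the weights $\b^{-i}$ alternate in sign and no term-by-term positivity is available. Even the trivial eigenvalue $\lambda_0=\sum_i\b^{-i}k_i$, the common row sum of $K_{\b^{-1}}$, is not manifestly nonnegative and must be extracted from the product formula. I expect to control the signs by factoring $\lambda_j$ explicitly and then using the integrality constraints $\b\in\{\pm2,\pm3,\dots\}$ and $d\geqslant 3$ to force the relevant factors into a consistent sign pattern; the inductive cosine-recurrence argument is the natural fallback should a clean product prove elusive.
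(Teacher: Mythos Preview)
Your reduction to the case $i=1$ via the Schur product theorem (i.e., $K_s\circ K_u=K_{su}$ and the Schur product of PSD matrices is PSD) is correct and is essentially the same idea as the paper's principal-submatrix-of-a-tensor-power argument, just phrased more cleanly.

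Where your proposal diverges from the paper is in the treatment of $i=1$, and here there is a genuine gap. The paper does \emph{not} attempt to compute all the eigenvalues $\lambda_j=\sum_i\b^{-i}p_i(\theta_j)$ of $K_{\b^{-1}}$. Instead it exploits a structural fact you are missing: because the dual eigenvalues of a graph with classical parameters are of the form $\theta_i^*=\zeta+\eta\,\b^{-i}$, the matrix $K_{\b^{-1}}$ lies in $\operatorname{span}\{E_0,E_1\}$; concretely, $K_{\b^{-1}}=\frac{|X|}{\eta}(E_1-\zeta E_0)$. So only \emph{two} eigenvalues are nonzero, and the whole question reduces to showing $\eta>0$ and $\zeta\leqslant 0$. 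The first is easy. The second is immediate when $\b>0$ (then $E_1$ would be entrywise positive otherwise), but for $\b<0$ the paper has to invoke nontrivial structural results about actual graphs: Terwilliger's kite-free theorem and Weng's inequality $a_1+\b+1\leqslant 0$.

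This last point is precisely where your plan to ``factor $\lambda_j$ explicitly'' and ``use the integrality constraints'' would stall. The inequality $\zeta\leqslant 0$ (equivalently $a_1+\b+1\leqslant 0$) is \emph{not} a formal consequence of the classical-parameter expressions; it depends on the existence of an actual distance-regular graph with those parameters. A purely parametric/telescoping computation cannot see this. So for $\b<0$ your approach, as stated, lacks the key ingredient, and you would ultimately need to import the same structural theorems the paper uses.
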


\begin{proof}
We already mentioned that $\b^0=1\in\pi(\Gamma)$.
By \cite[Corollary 8.4.2]{BCN1989B}, the first projection matrix $E_1$ is of the form
\begin{gather*}
	E_1=\frac{1}{|X|}\sum_{i=0}^d \big(\zeta+\eta \b^{-i}\big)A_i=\zeta E_0+\frac{\eta}{|X|}K_{\b^{-1}}
\end{gather*}
for some $\zeta,\eta\in\mathbb{R}$, where we have used \eqref{E0}.
It is customary to write\footnote{\label{*-notation}The $*$-notation here is used to mean ``dual'' objects, and is standard in the theory of distance-regular graphs. The $\theta_i^*$ are referred to as the \emph{dual eigenvalues} of $\Gamma$.} $\theta_i^*=\zeta+\eta \b^{-i}$, $i=0,1,\dots,d$.
Note that $\theta_0^*=\operatorname{tr}(E_1)=m_1$, the multiplicity of $\theta_1$ in the spectrum of $\Gamma$.
It is known (see~\cite[Lemma 2.2.1]{BCN1989B}) that $|\theta_i^*|\leqslant m_1$ for $i=0,1,\dots, d$.
We have $\eta\ne 0$, for otherwise $E_1$ would be a scalar multiple of $E_0$, a contradiction.
If $\eta<0$ then $\eta \b^{-1}>\eta$, so that $\theta_1^*>m_1$, again a~contradiction.
Hence $\eta>0$.

We next show that $\zeta\leqslant 0$.
If $\zeta>0$ and $\b\geqslant 2$ then $E_1$ would be a non-zero non-negative matrix and thus $\operatorname{tr}(E_0E_1)>0$ by \eqref{E0}, which is absurd.
Hence suppose that $\b\leqslant -2$.
We~observe that $\zeta\leqslant 0$ if and only if $\theta_1^*\leqslant m_1/\b$.
By \cite[Lemma 2.2.1]{BCN1989B}, we have $\theta_1/k=\theta_1^*/m_1$.
Using this, \eqref{k}, \eqref{a}, and \eqref{eigenvalues}, we easily verify that $\zeta\leqslant 0$ if and only if $\theta_1\leqslant k/\b$ if and only if $a_1+\b+1\leqslant 0$.
A \emph{kite of length two} in $\Gamma$ is a quadruple $(x,y,z,w)$ of vertices such that $x\sim y\sim w$, $x\sim z\sim w$, $y\sim z$, and $x\not\sim w$:
\begin{center}
\begin{tikzpicture}
[bit/.style={circle,inner sep=0mm,minimum size=1.3mm,draw=black}]
\node [bit] (1) [label=left:$x$] {};
\node [bit] (2) [above right=of 1,yshift=-4mm, label=above:$y$] {};
\node [bit] (3) [below right=of 1,yshift=4mm, label=below:$z$] {};
\node [bit] (4) [below right=of 2,yshift=4mm, label=right:$w$] {};
\draw (1.north east) -- (2.south west);
\draw (1.south east) -- (3.north west);
\draw (2.south) -- (3.north);
\draw (2.south east) -- (4.north west);
\draw (3.north east) -- (4.south west);
\end{tikzpicture}
\end{center}
A kite of length two is also called a \emph{parallelogram of length two}.
By \cite[Theorem 2.12]{Terwilliger1995EJC}, $\Gamma$ has no kite of length two.
By \cite[Lemma 3.6]{Weng1997EJC}, we then have $a_1+\b+1\leqslant 0$.
It follows that $\zeta\leqslant 0$.

Since
\begin{gather*}
	K_{\b^{-1}}=\frac{|X|}{\eta}(E_1-\zeta E_0),
\end{gather*}
it follows that $K_{\b^{-1}}$ is positive semidefinite, i.e., $\b^{-1}\in\pi(\Gamma)$.
For $i=2,3,\dots$, we observe that the matrix $K_{\b^{-i}}$ is a principal submatrix of $(K_{\b^{-1}})^{\otimes i}$ since $(K_{\b^{-i}})_{x,y}=((K_{\b^{-1}})_{x,y})^i$ for all $x,y\in X$, and it is therefore positive semidefinite as well.
This completes the proof.
\end{proof}

We comment on the uniqueness of the classical parameters for $\Gamma$.
By \cite[Corollary 6.2.2]{BCN1989B}, the classical parameters $(d,\b,\alpha,\beta)$ for $\Gamma$ are uniquely determined provided that $d\geqslant 3$, with the exception of the pairs
\begin{gather}\label{two expressions}
	\big(d,\ell^2,0,\ell\big), \qquad \bigg(d,-\ell,\frac{\ell(\ell+1)}{1-\ell},\frac{\ell(1+(-\ell)^d)}{1-\ell}\bigg),
\end{gather}
where $\ell\geqslant 2$.
Ivanov and Shpectorov \cite{IS1989LAA} showed that if $\Gamma$ has the above classical parameters then $\ell$ is a prime power and $\Gamma$ is the Hermitian dual polar graph $^2\!A_{2d-1}(\ell)$ (cf.~\cite[Section~9.4]{BCN1989B}).

\begin{Assumption}\label{assume classical parameters}
Recall Assumption $\ref{condition (DR)}$.
We moreover assume that the graph $\Gamma=\Gamma_{\lambda}$ has classical parameters $(d,\b,\alpha,\beta)$ with $d\geqslant 3$ and $\b\in\{\pm 2,\pm 3,\dots\}$.
We will view $\b$, $\alpha$, and~$\beta$ as functions of~$\Gamma$.
For the classical parameters in \eqref{two expressions}, we understand that we may choose either set of them.
\end{Assumption}

Recall that we are assuming that $d\rightarrow\infty$; cf.~\eqref{d tends to infinity}.
Our goal is to describe the limit behaviors of the other parameters $\b$, $\alpha$, and $\beta$.
In the following discussions, we will freely use the expressions \eqref{b and c}, \eqref{k}, and \eqref{a}.
In particular, we note that
\begin{gather}\label{alpha is discrete}
	\alpha=\frac{c_2}{\b+1}-1 \in \frac{1}{\b+1}\mathbb{Z}.
\end{gather}
The cycles ($k=2$) with $d\geqslant 3$ do not have classical parameters, so that it follows from Claim~\ref{exactly one accumulation point} that
\begin{gather}\label{k diverges}
	k\rightarrow\infty.
\end{gather}

\begin{Claim}\label{q is finite}
With reference to Assumption $\ref{assume classical parameters}$, $\limsup |\b|<\infty$.
In particular, $\b$ eventually takes only finitely many values.
\end{Claim}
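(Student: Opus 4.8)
The plan is to leverage the fact, already available to us, that the intersection numbers stabilize. By Claim~\ref{ci is eventually constant} each $c_i$ is eventually constant along the net; in particular $c_2$ and $c_3$ are eventually constant, and hence bounded. (They are defined for all large $\Gamma$ since $d\geqslant 3$ eventually.) The idea is then to eliminate $\alpha$ from the classical-parameter expressions for $c_2$ and $c_3$ so as to obtain a single polynomial relation in $\b$ whose coefficients are controlled by $c_2,c_3$; boundedness of those two quantities will immediately force $\b$ to be bounded.

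Concretely, I would first read off from \eqref{b and c} that, since $\gauss{2}{1}=1+\b$ and $\gauss{3}{1}=1+\b+\b^2$,
\[
	c_2=\gauss{2}{1}\big(1+\alpha\gauss{1}{1}\big)=(1+\b)(1+\alpha), \qquad c_3=\gauss{3}{1}\big(1+\alpha\gauss{2}{1}\big)=\big(1+\b+\b^2\big)\big(1+\alpha(1+\b)\big).
\]
The next step is to remove $\alpha$. By \eqref{alpha is discrete} we have $1+\alpha=c_2/(\b+1)$, so $\alpha(1+\b)=c_2-(1+\b)$ and therefore $1+\alpha(1+\b)=c_2-\b$. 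Substituting this into the expression for $c_3$ yields the clean identity
\[
	c_3=\big(1+\b+\b^2\big)(c_2-\b).
\]

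Finally I would conclude by a size comparison. As $|\b|\to\infty$ the factor $1+\b+\b^2$ grows like $\b^2$ while, since $c_2$ is bounded, $c_2-\b$ grows like $-\b$; thus the right-hand side would be unbounded in absolute value. Since $c_3$ is eventually constant, this is impossible along any subnet, giving $\limsup|\b|<\infty$. Equivalently, on the eventual subnet where $c_2$ and $c_3$ are both constant, $\b$ is a root of the fixed cubic $\big(1+\b+\b^2\big)(c_2-\b)=c_3$, which has at most three real roots; as $\b$ is an integer this also gives the stated conclusion that $\b$ eventually takes only finitely many values. There is no real obstacle here: the only step requiring any thought is the elimination of $\alpha$ to reach the identity $c_3=(1+\b+\b^2)(c_2-\b)$, after which the boundedness is an immediate cubic-versus-bounded comparison, the remaining work being only the net-theoretic bookkeeping of ``eventually constant''.
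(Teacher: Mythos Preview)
Your argument is correct and is essentially the same as the paper's: both use that $c_2$ and $c_3$ are eventually constant together with the classical-parameter formulas to bound $\b$. The only cosmetic difference is that the paper phrases the contradiction via $\alpha=c_2/(\b+1)-1\to -1$ along a subnet with $|\b|\to\infty$, whence $c_3\to\infty$, whereas you first eliminate $\alpha$ to obtain the identity $c_3=(1+\b+\b^2)(c_2-\b)$ (which the paper in fact records later, in the proof of Claim~\ref{at most 3 accumulation points}) and read off the contradiction directly.
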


\begin{proof}
Suppose that $\limsup |\b|=\infty$, or equivalently, $0$ is an accumulation point of $1/\b$.
Then there exists a subnet of $(\Gamma_{\lambda})_{\lambda\in \Lambda}$ for which $|\b|\rightarrow \infty$.
By Claim~\ref{ci is eventually constant}, $c_2$ is eventually constant, so that it follows from \eqref{alpha is discrete} that $\alpha\rightarrow -1$ on this subnet.
This in turn implies that $c_3\rightarrow\infty$ on this subnet, but this is impossible since $c_3$ is also eventually constant by Claim~\ref{ci is eventually constant}.
It follows that $\limsup |\b|<\infty$.
\end{proof}

\begin{Claim}\label{at most 3 accumulation points}
With reference to Assumption $\ref{assume classical parameters}$, suppose that $\b$ is not convergent.
Then the set of accumulation points of $\b$ is of the form $\big\{\ell,\ell^2\big\}$ or $\big\{\ell,-\ell,\ell^2\big\}$ for some $\ell\in\{\pm 2,\pm 3,\dots\}$, where $\alpha\ne 0$ when $\b=\pm \ell$, and $\alpha=0$ when $\b=\ell^2$.
Moreover, we have $a_i/\sqrt{k}\rightarrow 0$ for every $i=1,2,\dots$.
\end{Claim}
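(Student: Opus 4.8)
The plan is to exploit that, by Claim~\ref{q is finite}, the base $\b$ is eventually confined to a finite set of integers, so its accumulation points are exactly the values attained cofinally; since $\b$ is not convergent there are at least two of them. Fix such an accumulation value $r$ and pass to the subnet on which $\b=r$. By Claim~\ref{ci is eventually constant} each $c_i$ is eventually a constant positive integer $\bar{c}_i$, and then by \eqref{alpha is discrete} the parameter $\alpha$ is eventually the constant $\alpha_r=\bar{c}_2/(r+1)-1$. Hence, writing $\gauss{i}{1}=(r^i-1)/(r-1)$ on this subnet, \eqref{b and c} yields the system
\begin{gather*}
	\bar{c}_i=\gauss{i}{1}\bigl(1+\alpha_r\gauss{i-1}{1}\bigr), \qquad i=1,2,\dots,
\end{gather*}
valid for \emph{every} accumulation value $r$ with the same left-hand sides $\bar{c}_i$.

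\textbf{Possible accumulation values.} The quickest way to bound their number is to specialize to $i=2,3$: eliminating $\alpha_r$ shows every accumulation value is an integer root of the fixed cubic $(1+x+x^2)(\bar{c}_2-x)=\bar{c}_3$, so there are at most three. To pin down the exact form I would expand the right-hand side as an exponential polynomial $\bar{c}_i=A_r+B_r r^i+C_r r^{2i}$ with $C_r=\alpha_r/\bigl(r(r-1)^2\bigr)$, and compare two distinct accumulation values $r\ne s$. Matching coefficients of the independent exponentials $i\mapsto\rho^i$ forces the active bases $\{r,r^2\}$ and $\{s,s^2\}$ (those with nonzero coefficient) to coincide. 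Now the dominant growth rate $\lim_i|\bar{c}_i|^{1/i}$ is common to all accumulation values, equal to $r^2$ when $\alpha_r\ne 0$ (then $C_r\ne 0$) and to $r$ when $\alpha_r=0$; moreover $\alpha_r=0$ with $r\leqslant -2$ is impossible since it gives $\bar{c}_2=r+1<0$, so a zero-$\alpha$ value is positive, and two such values would coincide (compare $i=2$). It follows that there is a fixed $m\geqslant 2$ with $|r|=m$ for the values carrying $\alpha_r\ne 0$ and $r=m^2$ for the (at most one) value carrying $\alpha_r=0$; thus the accumulation set lies in $\{m,-m,m^2\}$, a short computation gives $\alpha_{m}=m(m-1)/(m+1)\ne 0$, $\alpha_{-m}=m(m+1)/(1-m)\ne 0$, $\alpha_{m^2}=0$, and this is precisely the dichotomy in the statement (with $\ell=\pm m$).

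\textbf{The main obstacle} is to exclude the square-free set $\{m,-m\}$, which is genuinely consistent with the $c_i$-system: all three bases $m$, $-m$, $m^2$ produce the identical sequence $\bar{c}_i=(m^{2i}-1)/(m^2-1)$, so the interacting Fock space data $\omega_i,\gamma_i,\alpha_i$ cannot see the difference, and the exclusion must come from the graphs themselves. Here I would use the uniqueness of classical parameters. On the subnet $\b=-m$ we have $\b\leqslant -2$, so the no-kite argument of the proof of Proposition~\ref{negative powers of q} applies and gives $a_1\leqslant -\b-1=m-1$, making $a_1$ bounded; combining this with $\alpha_{-m}=m(m+1)/(1-m)$ via \eqref{a}, I would identify these graphs with the second set of classical parameters in \eqref{two expressions}, i.e.\ with the Hermitian dual polar graphs, which by \cite[Corollary 6.2.2]{BCN1989B} also carry the base $m^2$. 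The presence of $-m$ would then be inseparable from that of $m^2$, contradicting the absence of a square and leaving only $\{m,m^2\}$, $\{-m,m^2\}$, or $\{m,-m,m^2\}$. I expect the delicate point to be exactly this identification: the no-kite bound only traps $a_1$ in $\{0,1,\dots,m-1\}$, and ruling out the non-exceptional values of $a_1$ (equivalently, pinning $\beta$ to $m\bigl(1+(-m)^d\bigr)/(1-m)$) is where the real work lies.

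\textbf{The relation $a_i/\sqrt{k}\to 0$.} Since $k\to\infty$ by \eqref{k diverges}, Claim~\ref{case 0} gives that $a_i/\sqrt{k}$ converges on the whole net to a single limit $L_i$. On the subnet $\b=r$ one reads off directly from \eqref{a} that $a_i-\gauss{i}{1}a_1=\gauss{i}{1}\alpha_r\bigl(1-\gauss{i}{1}-\gauss{i-1}{1}\bigr)$ is a constant independent of $d$, whence $a_i/\sqrt{k}-\gauss{i}{1}\,(a_1/\sqrt{k})\to 0$ and therefore $L_i=\gauss{i}{1}L_1=\tfrac{r^i-1}{r-1}L_1$. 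Applying this with $i=2$ to two distinct accumulation values $r\ne s$ gives $(1+r)L_1=(1+s)L_1$, hence $(r-s)L_1=0$ and $L_1=0$; then $L_i=0$ for all $i$, i.e.\ $a_i/\sqrt{k}\to 0$.
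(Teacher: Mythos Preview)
Your setup, the argument that the accumulation set is contained in $\{m,-m,m^2\}$, and the proof that $a_i/\sqrt{k}\to 0$ are all correct. The last is essentially the paper's argument recast constructively (the paper argues by contradiction that if some $a_i/\sqrt{k}\not\to 0$ then $a_j/\sqrt{k}$ cannot converge for $j\geqslant 2$ because $\gauss{j}{1}$ varies with $\b$; your identity $L_i=\gauss{i}{1}L_1$ is the same observation). For the structure of the accumulation set the paper uses the asymptotic ratio $c_{i+1}/c_i$, which tends to $\b^2$ or $\b$ according as $\alpha\ne 0$ or $\alpha=0$; your dominant-growth-rate argument is an equivalent route.

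The genuine gap is precisely where you place it, and the tools you propose do not close it. The no-kite inequality only yields $a_1\leqslant m-1$, and knowing $\alpha_{-m}=m(m+1)/(1-m)$ still leaves $\beta$ (equivalently $a_1\in\{0,1,\dots,m-1\}$) free; the exceptional parameters in \eqref{two expressions} correspond to the single value $a_1=m-1$, and nothing in your argument forces this. The reference \cite[Corollary~6.2.2]{BCN1989B} is not the right lever either: it asserts uniqueness of classical parameters \emph{except} for the pair \eqref{two expressions}, but gives no way to recognise when a graph \emph{is} that exception.

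The paper closes the gap with two structural results you do not invoke. First, it excludes $a_1=0$: since $\alpha_{-m}\ne 0$ one has $a_2\ne\gauss{2}{1}a_1$, so $a_1=0$ would give $a_2\ne 0$, and then \cite[Theorem~2.1]{PW2009JCTB} forces $c_2\leqslant 2$, contradicting $\bar c_2=m^2+1\geqslant 5$. Second, with $\b<0$, $a_1\ne 0$, and $d\geqslant 4$, Weng's classification \cite[Main~Theorem]{Weng1999JCTB} restricts $\Gamma$ to three explicit families, each with a prescribed value of $\alpha$; comparing with $\alpha_{-m}$ (equivalently checking $c_2=\b^2+1$) singles out the Hermitian dual polar graph $^2\!A_{2d-1}(m)$. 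This graph carries the alternative base $m^2$ via \eqref{two expressions}, which is how the paper rules out the pure $\{m,-m\}$ configuration. As you yourself observe, all three candidate bases produce the same sequence $(\bar c_i)$, so no amount of manipulation of the parameters $\omega_i$, $\alpha_i$, $\gamma_i$ can distinguish them; an external classification of the underlying graphs is genuinely required.
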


\begin{proof}
Recall Claim~\ref{q is finite}.
Since $c_2$ is eventually constant by Claim~\ref{ci is eventually constant}, it follows from \eqref{alpha is discrete} that $\alpha$ is eventually determined by $\b$.
We have
\begin{gather*}
	\frac{c_{i+1}}{c_i}=\frac{\b^{i+1}-1}{\b^i-1}\cdot \frac{\b-1+\alpha(\b^i-1)}{\b-1+\alpha(\b^{i-1}-1)}, \qquad i=1,2,\dots.
\end{gather*}
For sufficiently large $i$ (cf.~\eqref{d tends to infinity}), the RHS can be arbitrarily close to $\b^2$ when $\alpha\ne 0$ and $\b$ when $\alpha=0$.
Let $\ell$ and $\ell'$ be two distinct accumulation points of $\b$, where $|\ell'|\geqslant |\ell|(\geqslant 2)$.
Since the LHS above is eventually constant for every $i$ by Claim~\ref{ci is eventually constant}, it follows that $\ell'\in\big\{{-}\ell,\ell^2\big\}$, where $\alpha\ne 0$ when $\b=\pm \ell$, and $\alpha=0$ when $\b=\ell^2$.

We next show that $a_i/\sqrt{k}\rightarrow 0$ for every $i$.
Recall that $k\rightarrow\infty$ (cf.~\eqref{k diverges}).
The $a_i/\sqrt{k}$ are convergent by Claims \ref{case 0} and \ref{exactly one accumulation point}.
Suppose that $a_i/\sqrt{k}\nrightarrow 0$ for some $i$.
Then we have $a_i\rightarrow\infty$ for this $i$, and since $\b$ and $\alpha$ are eventually bounded, it follows that $\bigl|\beta+\alpha\gauss{d}{1}\bigr|=\Theta(a_i)\rightarrow\infty$, and hence that $a_j\sim \gauss{j}{1}\bigl(\beta+\alpha\gauss{d}{1}\bigr)\sim \gauss{j}{1}a_1$ for all $j$.
On the one hand, this shows that $a_1/\sqrt{k}\nrightarrow 0$.
On the other hand, this also shows that $a_j/\sqrt{k}$ cannot converge whenever $j\geqslant 2$ since $\gauss{j}{1}$ takes at least two values depending on $\b$.
Hence we must have $a_i/\sqrt{k}\rightarrow 0$ for every $i$, as desired.

It remains to show that $\ell^2$ is an accumulation point of $\b$.
There is nothing to prove if $\ell'=\ell^2$, so that we assume that $\ell'=-\ell$.
We have $c_3=\big(\b^2+\b+1\big)(c_2-\b)$ by \eqref{alpha is discrete}.
By setting $\b=\pm\ell$ in this expression and then equating, we find that eventually $c_2=\ell^2+1$.
Choose $\b\in\{\ell,-\ell\}$ with $\b<0$, and recall that $\alpha\ne 0$ in this case.
In particular, we have $a_2\ne \gauss{2}{1}a_1$.
Suppose that $a_1=0$.
Then $a_2\ne 0$, and we have $c_2\leqslant 2$ by \cite[Theorem 2.1]{PW2009JCTB}, but this is impossible since $c_2=\ell^2+1\geqslant 5$.
Hence $a_1\ne 0$.
Then it follows from \cite[Main Theorem]{Weng1999JCTB} that, provided that $d\geqslant 4$, either (i) $\Gamma$ is the dual polar graph $^2\!A_{2d-1}(-\b)$ with $\alpha=\b(\b-1)/(\b+1)$ (cf.~\cite[Section~9.4]{BCN1989B}), or (ii) $\Gamma$ is the Hermitian forms graph $\operatorname{Her}\big(d,\b^2\big)$ with $\alpha=\b-1$ (cf.~\cite[Section~9.5]{BCN1989B}), or (iii) we have $\alpha=(\b-1)/2$ and $\beta=-\big(\b^d+1\big)/2$.
Since $c_2=\b^2+1$, it follows from~\eqref{alpha is discrete} that we are in (i) above.
However, the graph $^2\!A_{2d-1}(-\b)$ has another set of classical parameters $\big(d,\b^2,0,-\b\big)$ (cf.~\eqref{two expressions}), and therefore $\ell^2=\b^2$ must also be an accumulation point.
\end{proof}

\begin{Theorem}\label{classification}
With reference to Assumption $\ref{assume classical parameters}$, $\b$ eventually takes at most three values.
Suppose that $\b$ is eventually constant.
Then so is $\alpha$, and the following hold:
\begin{enumerate}\itemsep=0pt
\item[$(i)$] If $\alpha\ne 0$, then $\beta/\sqrt{k}$ is eventually bounded, and there exist scalars $\gamma$ and $\rho$ with $\rho>0$ and $\gamma(\rho+\alpha/\rho)>-1$, such that $\q\sqrt{k}\rightarrow\gamma$ and the accumulation points of $\beta/\sqrt{k}$ are in $\{\rho,\alpha/\rho\}$.
Moreover, we have $\rho=\sqrt{-\alpha}$ if $\b<0$.
\item[$(ii)$] If $\alpha=0$, then there exist scalars $\gamma$ and $\rho$ with $\rho\geqslant 0$ and $\gamma\rho>-1$, such that $\q\sqrt{k}\rightarrow\gamma$ and~$\beta/\sqrt{k}\rightarrow\rho$.
\end{enumerate}
Suppose that $\b$ is not convergent.
Then there exists a subnet of $(\Gamma_{\lambda})_{\lambda\in \Lambda}$ for which $\b$ is eventually constant and (ii) holds above with $\rho=0$.

Conversely, if $(\Gamma_{\lambda})_{\lambda\in \Lambda}$ is a net of distance-regular graphs having classical parameters with $d\geqslant 3$ and $\b\in\{\pm 2,\pm 3,\dots\}$, where $\b$ and $\alpha$ are eventually constant, such that $d\rightarrow \infty$ and $(i)$ or $(ii)$ holds above with respect to a suitable function $\q\in\pi(\Gamma)$ with $\Sigma_{\q}^2(A)>0$, then $(\Gamma_{\lambda})_{\lambda\in \Lambda}$ satisfies Assumption $\ref{condition (DR)}$ $($and thus Assumption $\ref{assume classical parameters}$ as well$)$.
\end{Theorem}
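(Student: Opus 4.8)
The plan is to extract everything from the closed forms \eqref{b and c}, \eqref{k}, \eqref{a} together with the material already assembled in Section~\ref{sec: DRGs and CLT}. The organizing identity is the one for $a_1$: since $\gauss{1}{1}=1$ and $\gauss{0}{1}=0$, \eqref{a} and \eqref{k} give $a_1=\beta+\alpha k/\beta-(1+\alpha)$, so that with $u:=\beta/\sqrt{k}$ and $k\rightarrow\infty$ (cf.\ \eqref{k diverges}) we have $a_1/\sqrt{k}=u+\alpha/u+o(1)$. I would first record that $\gamma:=\gamma_1=\lim\q\sqrt{k}$ exists --- this is just $\lim\overline{\gamma}_1$, shown to exist immediately after \eqref{omega_1 exists} --- and that ``$\b$ takes at most three values eventually'' is immediate: by Claim~\ref{q is finite} the integer $\b$ eventually takes finitely many values, and for an integer-valued net the values attained cofinally coincide with the accumulation points, of which there are at most three by Claim~\ref{at most 3 accumulation points}.

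Suppose next that $\b$ is eventually constant; then $\alpha$ is too, by \eqref{alpha is discrete} and Claim~\ref{ci is eventually constant}. By Claim~\ref{case 0} the ratio $a_1/\sqrt{k}$ converges, say to $M$, so $u+\alpha/u\rightarrow M$. In case~(i), where $\alpha\ne0$, this forces $u$ to be eventually bounded (otherwise $u+\alpha/u\rightarrow\pm\infty$) and bounded away from $0$ (otherwise $\alpha/u\rightarrow\pm\infty$); hence every accumulation point of $u=\beta/\sqrt{k}$ is a finite nonzero root of $x^2-Mx+\alpha=0$, and since the two roots multiply to $\alpha$ they are $\rho$ and $\alpha/\rho$, with $\rho$ taken positive (for $\b\ge2$ this holds because then $\beta>0$ and so $u>0$; for $\b<0$ it follows from the next paragraph). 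The inequality then follows from \eqref{omega_1 exists}: as $\q\rightarrow0$ and $\q a_1=(\q\sqrt{k})(a_1/\sqrt{k})\rightarrow\gamma M$, we get $\Sigma_{\q}^2(A)/k=(1-\q)(1+\q+\q a_1)\rightarrow 1+\gamma M=1/\omega_1>0$, i.e.\ $\gamma(\rho+\alpha/\rho)>-1$. Case~(ii), $\alpha=0$, is lighter: here $c_2=1+\b\ge1$ forces $\b\ge2$ and hence $\beta>0$, while $a_1/\sqrt{k}=\beta/\sqrt{k}+o(1)\rightarrow M=:\rho\ge0$, so $\beta/\sqrt{k}$ genuinely converges and the inequality reads $\gamma\rho>-1$.

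The one genuinely non-formal step is the assertion $\rho=\sqrt{-\alpha}$ for $\b<0$ in case~(i), and I expect it to be the main obstacle, since everywhere else the argument is asymptotic bookkeeping. Here I would invoke the structural bound $a_1+\b+1\le0$ proved for $\b\le-2$ inside the proof of Proposition~\ref{negative powers of q} (ultimately the absence of kites of length two): it gives $a_1\le|\b|-1$, so $a_1$ is bounded and $M=0$. Multiplying $u+\alpha/u\rightarrow0$ by the bounded quantity $u$ yields $u^2\rightarrow-\alpha$, which forces $\alpha<0$ and $|u|\rightarrow\sqrt{-\alpha}$, so the accumulation points of $\beta/\sqrt{k}$ lie in $\{\sqrt{-\alpha},-\sqrt{-\alpha}\}=\{\rho,\alpha/\rho\}$ with $\rho=\sqrt{-\alpha}$ (the sign of $\beta$ following that of $\gauss{d}{1}$, which alternates with the parity of $d$). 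Finally, when $\b$ is not convergent I would pass to the subnet on which $\b$ equals the accumulation point $\ell^2$ eventually; there $\alpha=0$ by Claim~\ref{at most 3 accumulation points}, putting us in case~(ii), and $a_1/\sqrt{k}\rightarrow0$ on this subnet (again Claim~\ref{at most 3 accumulation points}) gives $\rho=0$.

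For the converse I would verify Assumption~\ref{condition (DR)} directly. Since $\b,\alpha$ are eventually constant, each $c_i=\gauss{i}{1}(1+\alpha\gauss{i-1}{1})$ is eventually a constant, and writing $b_i/k=(1-\gauss{i}{1}/\gauss{d}{1})(1-\alpha\gauss{i}{1}/\beta)$ one gets $b_i/k\rightarrow1$, using $\gauss{i}{1}/\gauss{d}{1}\rightarrow0$ together with either $\alpha=0$ or $|\beta|\rightarrow\infty$ (the latter because the accumulation points of $\beta/\sqrt{k}$ are nonzero in case~(i)). The hypotheses give $a_1/\sqrt{k}\rightarrow M$ (equal to $\rho+\alpha/\rho$ in (i) and to $\rho$ in (ii)), whence $\Sigma_{\q}^2(A)/k\rightarrow1+\gamma M>0$ by the assumed inequality; therefore $\overline{\omega}_1\rightarrow\omega_1=1/(1+\gamma M)>0$ and $\overline{\omega}_i=(c_ib_{i-1})/\Sigma_{\q}^2(A)\rightarrow c_i\omega_1>0$. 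Likewise $a_j/\sqrt{k}\rightarrow\gauss{j}{1}M$ makes the $\overline{\alpha}_i$ converge, and $k_i\sim k^i/(c_1\cdots c_i)$ (from \eqref{k_i} and $b_i/k\rightarrow1$) makes $\overline{\gamma}_i=\q^i\sqrt{k_i}\rightarrow\gamma^i/\sqrt{c_1\cdots c_i}$ converge. Thus all limits in Assumption~\ref{condition (DR)} exist with $\omega_i>0$, as required.
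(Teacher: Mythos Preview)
Your argument is correct and follows essentially the same route as the paper's own proof: extract the behaviour of $a_1/\sqrt{k}$ from the closed form, identify the accumulation points of $\beta/\sqrt{k}$ as the roots of $\xi^2-M\xi+\alpha=0$, and read off the inequality $\gamma(\rho+\alpha/\rho)>-1$ from \eqref{omega_1 exists}. Two small points deserve comment.

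\emph{The case $\b<0$.} Your derivation of $M=0$ differs from the paper's. You invoke the bound $a_1+\b+1\le 0$ (kite-freeness, as in the proof of Proposition~\ref{negative powers of q}) to see that $a_1$ is bounded and hence $a_1/\sqrt{k}\to 0$. The paper instead observes that $a_i/\sqrt{k}\to\gauss{i}{1}\sigma$ for every $i$, and since the $a_i$ are non-negative while the $\gauss{i}{1}$ alternate in sign for $\b<0$, one is forced to have $\sigma=0$. Both arguments are valid; yours is shorter but leans on structural input already cited elsewhere, whereas the paper's is self-contained within the asymptotic framework and uses all the $a_i$ rather than just $a_1$.

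\emph{The converse.} You verify the limits $\omega_i$, $\alpha_i$, $\gamma_i$ directly (the paper merely checks $\omega_i$, $\alpha_i$ and then appeals to Proposition~\ref{remark on QCLT} for the $\gamma_i$), which is fine. However, you use $k\to\infty$ without justification---for instance in asserting $|\beta|\to\infty$ in case~(i) and in the $o(1)$ of $a_1/\sqrt{k}=u+\alpha/u+o(1)$. The paper fills this by noting that cycles with $d\ge 3$ do not have classical parameters, so $k\ge 3$, and then the Bannai--Ito theorem (Theorem~\ref{BI conjecture}) forces $k\to\infty$. You should insert this step.
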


\begin{proof}
The first statement follows from Claims \ref{q is finite} and \ref{at most 3 accumulation points}.
We also mentioned earlier (cf.~\eqref{k diverges}) that $k\rightarrow\infty$.

Suppose that $\b$ is eventually constant.
That $\alpha$ is eventually constant follows from Claim~\ref{ci is eventually constant} and \eqref{alpha is discrete}.
Set $\gamma=\gamma_1$.
Since it exists, we have $\q\rightarrow 0$.
Recall again that the $a_i/\sqrt{k}$ are convergent by Claims \ref{case 0} and \ref{exactly one accumulation point}, and observe that this is equivalent to saying that $\bigr(\beta+\alpha\gauss{d}{1}\bigr)/\sqrt{k}$ converges, say, to $\sigma$.
Assume that $\alpha\ne 0$, and let $\xi=\rho$ be a root of the equation $\xi+\alpha/\xi=\sigma$ in the variable $\xi$.
Then the other root is $\xi=\alpha/\rho$.
Since $\beta/\sqrt{k}$ and $\gauss{d}{1}/\sqrt{k}$ are inverses of each other, it follows that $\beta/\sqrt{k}$ is eventually bounded, and that $\rho$ and $\alpha/\rho$ are its only possible accumulation points.
If $\b>0$ then $\alpha>0$ and $\beta>0$, so that we must have $\rho>0$.
If $\b<0$ then $\alpha<0$, and since $a_i/\sqrt{k}\rightarrow \gauss{i}{1}\sigma$ for every $i$ and the $\gauss{i}{1}$ alternate in sign, it follows that $\sigma=0$, so that we may take $\rho=\sqrt{-\alpha}>0$ (and thus $\alpha/\rho=-\sqrt{-\alpha}<0$).
By \eqref{omega_1 exists} and since $\q a_1=\q\sqrt{k}\cdot a_1/\sqrt{k}\rightarrow \gamma\sigma=\gamma(\rho+\alpha/\rho)$, we have $\gamma(\rho+\alpha/\rho)>-1$.
Assume next that $\alpha=0$.
We~have $\b>0$ and $\beta>0$ in this case, and set $\rho=\sigma\geqslant 0$.

Suppose that $\b$ is not convergent, and let the integer $\ell$ be as in Claim~\ref{at most 3 accumulation points}.
Then $\ell^2$ is an accumulation point of $\b$, so that there is a subnet of $(\Gamma_{\lambda})_{\lambda\in \Lambda}$ for which eventually $\b=\ell^2$.
Recall by Claim~\ref{at most 3 accumulation points} that eventually $\alpha=0$, and that $a_i/\sqrt{k}\rightarrow 0$ for every $i$.
Hence we are in the second case above with $\rho=\sigma=0$.

Finally, let $(\Gamma_{\lambda})_{\lambda\in \Lambda}$ be a net of distance-regular graphs as described in the last paragraph of the theorem.
Note that $k\geqslant 3$ since the cycles with $d\geqslant 3$ do not have classical parameters.
Hence it follows from Theorem \ref{BI conjecture} that $k\rightarrow\infty$.
Since $\q\sqrt{k}\rightarrow\gamma$, we then have $\q\rightarrow 0$.
If $\alpha\ne 0$ then we must also have $|\beta|\rightarrow\infty$ since $\rho$ and $\alpha/\rho$ are non-zero.
Observe that $a_i/\sqrt{k}\rightarrow\gauss{i}{1}(\rho+\alpha/\rho)$ for every $i$, where we set $0/0:=0$ for brevity.
In particular, we have $\q a_1\rightarrow\gamma(\rho+\alpha/\rho)>-1$, from which it follows that $\omega_1$ exists and is positive.
It is now immediate to verify that the $\omega_i$ all exist and are positive, and that the $\alpha_i$ exist.
From Proposition \ref{remark on QCLT} it also follows that the $\gamma_i$ exist.
\end{proof}

Consider the case when $\b$ is eventually constant in Theorem \ref{classification}, and recall that so is $\alpha$ in this case.
Recall (cf.~\eqref{b and c}) also the formula for the $c_i$.
The scalars $\omega_i$, $\alpha_i$, and $\gamma_i$ are expressed in terms of $\b$, $\alpha$, and the two scalars $\gamma$ and $\rho$ in Theorem \ref{classification}\,$(i)$ and $(ii)$ as
\begin{gather*}
	\omega_i=\frac{c_i}{1+\gamma(\rho+\alpha/\rho)}, \qquad \alpha_i=\frac{\gauss{i-1}{1}(\rho+\alpha/\rho)-\gamma}{\sqrt{1+\gamma(\rho+\alpha/\rho)}}, \qquad i=1,2,\dots,
\end{gather*}
and
\begin{gather*}
	\gamma_i=\frac{\gamma^i}{\sqrt{c_i\cdots c_1}}, \qquad i=0,1,\dots,
\end{gather*}
where we set $0/0:=0$ and $0^0:=1$.

\section{More background on graphs with classical parameters}\label{sec: more background}

In order to describe the asymptotic normalized spectral distributions corresponding to Theorem~\ref{classification}\,$(i)$ and $(ii)$, we collect in this section necessary formulas for distance-regular graphs with classical parameters.
Thus, throughout this section, we let $\Gamma=(X,R)$ denote a (fixed) distance-regular graph with classical parameters $(d,\b,\alpha,\beta)$, where $d\geqslant 3$ and $\b\in\{\pm 2,\pm3,\dots\}$.

Recall the eigenvalues $\theta_i$, $i=0,1,\dots,d$, of $\Gamma$ and the corresponding orthogonal projections~$E_i$, $i=0,1,\dots,d$; cf.~\eqref{eigenvalues} and \eqref{second basis}.
It is known (see~\cite[Corollary~8.4.2]{BCN1989B}) that the ordering $(E_0,E_1,\dots,E_d)$ is $Q$-\emph{polynomial};
that is to say, there exist scalars\footnote{See footnote \ref{*-notation}.} $a_i^*,b_i^*,c_i^*\in\mathbb{R}$, $i=0,1,\dots,d$, such that $b_d^*=c_0^*=0$, $b_{i-1}^*c_i^*\ne 0$, $i=1,2,\dots,d$, and that
\begin{gather}\label{*3-term recurrence}
	E_1\circ E_i=\frac{1}{|X|}(b_{i-1}^*E_{i-1}+a_i^*E_i+c_{i+1}^*E_{i+1}), \qquad i=0,1,\dots,d,
\end{gather}
where $\circ$ denotes the entrywise (or \emph{Hadamard} or \emph{Schur}) product of matrices, and $b_{-1}^*E_{-1}=c_{d+1}^*E_{d+1}:=0$.
Note that this property is dual to \eqref{3-term recurrence}.
See \cite[Section~5]{DKT2016EJC} for recent updates on the study of $Q$-polynomial distance-regular graphs.

From \eqref{3-term recurrence} it follows that there exist polynomials $v_i(\xi)\in\mathbb{R}[\xi]$, $i=0,1,\dots,d$, such that $\deg v_i(\xi)=i$ and $A_i=v_i(A)$.
Set $u_i(\xi)=v_i(\xi)/k_i$, $i=0,1,\dots,d$.
In general, by Leonard's theorem (see~\cite{Leonard1982SIAM}, \cite[Section~III.5]{BI1984B}), the polynomials $u_i$ associated with every $Q$-polynomial distance-regular graph are expressed in terms of the $q$-\emph{Racah polynomials} (cf.~\cite[Section~3.2]{KS1998R}) and their special/limit cases in the \emph{Askey scheme} of (basic) hypergeometric orthogonal polynomials~\mbox{\cite{KLS2010B,KS1998R}}.
In the most general (i.e., $q$-Racah) case, the $u_i$ are of the form
\begin{gather*}
u_i(\theta_j)=\basichypergeometricseries{4}{3}{\b^{-i},\,s^*\b^{i+1},\,\b^{-j},\,s\b^{j+1}}
{r_1\b,\,r_2\b,\,\b^{-d}}{\b;\b}\!, \qquad i,j=0,1,\dots,d,
\end{gather*}
where the parameters $r_1$, $r_2,s$, and $s^*$ satisfy $r_1r_2=ss^*\b^{d+1}\ne 0$, and we are using the standard notation for a basic hypergeometric series $_m\phi_n$: 
\begin{gather*}
\basichypergeometricseries{m}{n}{\mathfrak{a}_1,\dots,\mathfrak{a}_m} {\mathfrak{b}_1,\dots,\mathfrak{b}_n}{\b;\mathfrak{x}} =\sum_{h=0}^{\infty}\frac{(\mathfrak{a}_1;\b)_h\cdots(\mathfrak{a}_m;\b)_h} {(\mathfrak{b}_1;\b)_h\cdots(\mathfrak{b}_n;\b)_h}\, \frac{ (-1)^{(m-n-1)h} \mathfrak{x}^h }{ (\b;\b)_h \b^{(m-n-1)\binom{h}{2}} },
\end{gather*}
where $(\mathfrak{a};\b)_h$ denotes the $\b$-shifted factorial defined by
\begin{gather*}
	(\mathfrak{a};\b)_h=(1-\mathfrak{a})(1-\mathfrak{a}\b)\cdots\big(1-\mathfrak{a}\b^{h-1}\big), \qquad h=0,1,\dots.
\end{gather*}
To get the $u_i$ for our $\Gamma$, first fix $s,r_2\ne 0$ and let $s^*\rightarrow 0$ (so $r_1\rightarrow 0$), and then set
\begin{gather}\label{s, r_2 in terms of classical parameters}
	s=\frac{\alpha+1-\b}{(\alpha-\beta+\beta \b)\b^{d+1}}, \qquad r_2=\frac{\alpha}{(\alpha-\beta+\beta \b)\b},
\end{gather}
or equivalently,
\begin{gather*}
	\alpha=\frac{r_2(1-\b)}{s\b^d-r_2}, \qquad \beta=\frac{r_2\b-1}{\b(s\b^d-r_2)}.
\end{gather*}
See \cite[Proposition 6.2]{Tanaka2011EJC}.
The $u_i$ are the \emph{dual $q$-Hahn polynomials} (cf.~\cite[Section~3.7]{KS1998R}) for $s\ne 0$ and $r_2\ne 0$, the \emph{affine $q$-Krawtchouk polynomials} (cf.~\cite[Section~3.16]{KS1998R}) for $s=0$ and $r_2\ne 0$, and the \emph{dual $q$-Krawtchouk polynomials} (cf.~\cite[Section~3.17]{KS1998R}) for $s\ne 0$ and $r_2=0$.
See also~\cite[Examples 5.3--5.9]{Terwilliger2005DCC}.
We note that, in \cite[Proposition 6.2]{Tanaka2011EJC}, there is mentioned another case, called IA, which also corresponds to classical parameters with $\b\ne 1$.
The $u_i$ are then the \emph{quantum $q$-Krawtchouk polynomials} (cf.~\cite[Section~3.14]{KS1998R}).
However, it is known (see~\cite[Proposition~5.8]{DKT2016EJC}) that there exists no actual $\Gamma$ in this case.

For later use, we now establish another basic hypergeometric expression for the polynomials~$u_i$.
For the moment, fix $i,j=0,1,\dots,d$.
Recall Sear's transformation formula for a~terminating balanced $_4\phi_3$ series (cf.~\cite[Section~0.6]{KS1998R}):
\begin{gather*}
	\basichypergeometricseries{4}{3}{\b^{-i},\,\mathfrak{x},\,\mathfrak{y},\,\mathfrak{z}}{\mathfrak{l},\,\mathfrak{m},\,\mathfrak{n}}{\b;\b}=\basichypergeometricseries{4}{3}{\b^{-i},\,\mathfrak{x},\,\mathfrak{l}/\mathfrak{y},\,\mathfrak{l}/\mathfrak{z}}{\mathfrak{l},\,\mathfrak{x}\b^{1-i}/\mathfrak{m},\,\mathfrak{x}\b^{1-i}/\mathfrak{n}}{\b;\b} \frac{(\mathfrak{m}/\mathfrak{x};\b)_i(\mathfrak{n}/\mathfrak{x};\b)_i}{(\mathfrak{m};\b)_i(\mathfrak{n};\b)_i}\mathfrak{x}^i,
\end{gather*}
where $\mathfrak{xyz}\b^{1-i}=\mathfrak{lmn}\ne 0$.
Applying this formula twice and then simplifying a bit, we obtain
\begin{gather*}
	\basichypergeometricseries{4}{3}{\b^{-i},\,\mathfrak{x},\,\mathfrak{y},\,\mathfrak{z}}{\mathfrak{l},\,\mathfrak{m},\,\mathfrak{n}}{\b;\b} = \basichypergeometricseries{4}{3}{\b^{-i},\,\mathfrak{x},\,\mathfrak{xy}\b^{1-i}/\mathfrak{ln},\,\mathfrak{xz}\b^{1-i}/\mathfrak{ln}}{\mathfrak{x}\b^{1-i}/\mathfrak{n},\,\mathfrak{x}\b^{1-i}/\mathfrak{l},\,\mathfrak{m}}{\b;\b} \frac{(\mathfrak{n}/\mathfrak{x};\b)_i(\mathfrak{l}/\mathfrak{x};\b)_i}{(\mathfrak{n};\b)_i(\mathfrak{l};\b)_i}\mathfrak{x}^i.
\end{gather*}
Set
\begin{gather*}
	\mathfrak{x}=\b^{-j}, \qquad \mathfrak{y}=s^*\b^{i+1}, \qquad \mathfrak{z}=s\b^{j+1}, \qquad \mathfrak{l}=\b^{-d}, \qquad \mathfrak{m}=r_1\b, \qquad \mathfrak{n}=r_2\b
\end{gather*}
in this result.
Then we obtain the following expression for the $u_i$ for the $q$-Racah case:
\begin{gather*}
	u_i(\theta_j)=\basichypergeometricseries{4}{3}{\b^{-i},\,\b^{-j},\,s^*\b^{d-j+1}/r_2,\, s\b^{d-i+1}/r_2}{\b^{-i-j}/r_2,\,\b^{d-i-j+1},\,r_1\b}{\b;\b} \frac{(r_2\b^{j+1};\b)_i(\b^{j-d};\b)_i}{(r_2\b;\b)_i(\b^{-d};\b)_i\b^{ij}}.
\end{gather*}
By letting $s^*\rightarrow 0$, the RHS becomes
\begin{gather*}
\basichypergeometricseries{3}{2}{\b^{-i},\,\b^{-j},\,s\b^{d-i+1}/r_2}{\b^{-i-j}/r_2,\,\b^{d-i-j+1}}{\b;\b} \frac{(r_2\b^{j+1};\b)_i(\b^{j-d};\b)_i}{(r_2\b;\b)_i(\b^{-d};\b)_i\b^{ij}}
\\ \qquad
{}= \sum_{h=0}^i \frac{(\b^{-i};\b)_h (\b^{-j};\b)_h (r_2;s\b^{d-i+1};\b)_h \b^h}{(r_2;\b^{-i-j};\b)_h (\b^{d-i-j+1};\b)_h (\b;\b)_h} \, \frac{(r_2\b^{j+1};\b)_i(\b^{j-d};\b)_i}{(r_2\b;\b)_i(\b^{-d};\b)_i\b^{ij}},
\end{gather*}
where we write
\begin{gather*}
(\mathfrak{x};\mathfrak{y};\b)_h = (\mathfrak{x}-\mathfrak{y})(\mathfrak{x}-\mathfrak{y}\b)\cdots\big(\mathfrak{x}-\mathfrak{y}\b^{h-1}\big), \qquad h=0,1,\dots
\end{gather*}
for convenience.
We then set $s$ and $r_2$ as in \eqref{s, r_2 in terms of classical parameters}, and obtain the $u_i$ for $\Gamma$ as follows:
\begin{gather*}
	u_i(\theta_j) = \sum_{h=0}^i \frac{(\b^{-i};\b)_h (\b^{-j};\b)_h (\alpha;(\alpha+1-\b)\b^{1-i};\b)_h \b^h}{ (\alpha;(\alpha-\beta+\beta \b)\b^{1-i-j};\b)_h (\b^{d-i-j+1};\b)_h (\b;\b)_h} \frac{ (\alpha\!-\beta+\beta \b;\alpha \b^j;\b)_i (\b^{j-d};\b)_i }{ (\alpha\!-\beta+\beta \b;\alpha;\b)_i (\b^{-d};\b)_i \b^{ij}}.
\end{gather*}
Using \eqref{k_i} and \eqref{b and c} we have
\begin{gather*}
	k_i = \frac{(\b^{-d};\b)_i (\alpha-\beta+\beta \b;\alpha;\b)_i \b^{di}}{ (\alpha+1-\b;\alpha;\b)_i (\b;\b)_i },
\end{gather*}
from which it follows that
\begin{gather}
	v_i(\theta_j) = k_i u_i(\theta_j) = \sum_{h=0}^i \frac{(\b^{-j};\b)_h (\b^{j-d};\b)_{i-h} (\alpha-\beta+\beta \b;\alpha \b^j;\b)_{i-h}\b^{(i-h)(d-j)+jh}}{(\b;\b)_h (\alpha+1-\b;\alpha;\b)_{i-h} (\b;\b)_{i-h}}. \label{v_i}
\end{gather}

Let $m_i=\operatorname{tr}(E_i)$, the multiplicity of $\theta_i$ in the spectrum of $\Gamma$.
This value is computed in \cite[Theorem 8.4.3]{BCN1989B}:
\begin{align}
	m_i={}&\frac{ \prod_{h=0}^{i-1}\gauss{d-h}{1}\bigl(\beta-\gauss{h}{1}\alpha\bigr)
\bigl(1+\gauss{d-h}{1}\alpha+\b^{d-h}\beta\bigr) }{ \prod_{h=1}^i\gauss{h}{1}\bigl(\beta-\gauss{h}{1}\alpha+\b^h\bigr)\bigl(1+\gauss{d-h}{1}\alpha\bigr) } \, \frac{ \bigl(1+\gauss{d-2i}{1}\alpha+\b^{d-2i}\beta\bigr)\b^i }{ 1+\gauss{d}{1}\alpha+\b^d\beta } \notag \\
={}&\frac{ (\b^{-d};\b)_i (\alpha-\beta+\beta\b;\alpha;\b)_i (\alpha-\beta+\beta\b;(\alpha+1-\b)\b^{-d};\b)_i }{ (\b;\b)_i (\alpha-\beta+\beta\b;(\alpha+1-\b)\b;\b)_i (\alpha+1-\b;\alpha\b^{d-i};\b)_i } \notag \\
	& \times \frac{ (\alpha-\beta+\beta\b-(\alpha+1-\b)\b^{2i-d}) \b^{2di-i^2} }{\alpha-\beta+\beta\b-(\alpha+1-\b)\b^{-d}}. \label{m_i}
\end{align}

Finally, we obtain a closed formula for $|X|$, the number of vertices of $\Gamma$.
Recall the $\mathbb{C}$-vector space $W(\Gamma)$ from \eqref{primary module}.
In view of \eqref{second basis}, $W(\Gamma)$ has another orthonormal basis $\Psi_0,\Psi_1,\dots,\Psi_d$ defined by
\begin{gather*}
	\Psi_i=\sqrt{\frac{|X|}{m_i}}E_i\hat{o}, \qquad i=0,1,\dots,d.
\end{gather*}
As in the proof of Proposition \ref{negative powers of q}, write
\begin{gather*}
	E_1=\frac{1}{|X|}\sum_{i=0}^d \theta_i^*A_i.
\end{gather*}
Now, let
\begin{gather*}
	D=|X|\operatorname{diag} E_1\hat{o}.
\end{gather*}
Then we have
\begin{gather*}
	A\Psi_i=\theta_i\Psi_i, \qquad D\Phi_i=\theta_i^*\Phi_i, \qquad i=0,1,\dots,d.
\end{gather*}
Moreover, it follows from \eqref{3-term recurrence} and \eqref{*3-term recurrence} that the matrix representing the action of $A$ (resp.~$D$) on $W(\Gamma)$ with respect to the $\Phi_i$ (resp.~the $\Psi_i$) is tridiagonal with non-zero superdiagonal and subdiagonal entries.
This means that $A$ and $D$ act on $W(\Gamma)$ as a \emph{Leonard pair} in the sense of \cite[Definition~1.1]{Terwilliger2001LAA}.
In the theory of Leonard pairs, there is a scalar denoted by $\nu$ \cite[Definition~9.3]{Terwilliger2004LAA}, and it is easy to see that $\nu=\langle \Phi_0,\Psi_0\rangle^{-2}=|X|$ for the above Leonard pair on $W(\Gamma)$.
For the $q$-Racah case, the scalar $\nu$ is given in \cite[p.~273]{Terwilliger2004LAA} as follows:
\begin{gather*}
	\nu=\frac{ \big(s\b^2;\b\big)_d \big(s^*\b^2;\b\big)_d }{ r_1^d \b^d (s\b/r_1;\b)_d (s^*\b/r_1;\b)_d }.
\end{gather*}
Again by letting $s^*\rightarrow 0$ and then setting $s$ and $r_2$ as in \eqref{s, r_2 in terms of classical parameters}, it follows that
\begin{gather}\label{|X|}
	|X|=\frac{ (-1)^d (\alpha-\beta+\beta\b;(\alpha+1-\b)\b^{1-d};\b)_d \b^{\binom{d}{2}} }{ (\alpha+1-\b;\alpha;\b)_d }.
\end{gather}

\section{Description of asymptotic normalized spectral distributions}\label{sec: description of limit distributions}

In this section, we describe the asymptotic normalized spectral distributions (cf.~Remark \ref{limit distribution}) corresponding to Theorem \ref{classification}\,$(i)$ and $(ii)$, following~\cite{Hora2000PTRF}.

We retain the notation of the previous section.
The Borel probability measure $\mu$ on $\mathbb{R}$ from~\eqref{distribution} associated with the normalized adjacency matrix \eqref{normalized variable} is given by
\begin{gather}\label{normalized distribution}
	\mu\bigg(\frac{\theta_j-\q k}{\Sigma_{\q}(A)}\bigg)=\sum_{i=0}^d \q^i v_i(\theta_j)\, \frac{m_j}{|X|}, \qquad j=0,1,\dots,d,
\end{gather}
which follows from \eqref{Gibbs state as trace} and since
\begin{gather*}
	\bigg(\frac{A-\q kI}{\Sigma_{\q}(A)}\bigg)^{\!\!\ell} \! E_j=\bigg(\frac{\theta_j-\q k}{\Sigma_{\q}(A)}\bigg)^{\!\!\ell} \! E_j, \qquad A_i E_j=v_i(\theta_j) E_j
\end{gather*}
for $\ell=0,1,\dots$ and $i,j=0,1,\dots,d$.
From \eqref{m_i} and \eqref{|X|} it follows that
\begin{align}
	\frac{m_j}{|X|}= {}&\frac{ (\b^{-d};\b)_j (\alpha-\beta+\beta\b;\alpha;\b)_j (\alpha+1-\b;\alpha;\b)_{d-j} }{ (\b;\b)_j (\alpha-\beta+\beta\b;(\alpha+1-\b)\b^{j-d};\b)_{d+1} } \notag \\
	& \times (-1)^d(\alpha-\beta+\beta\b-(\alpha+1-\b)\b^{2j-d}) \b^{2dj-j^2-\binom{d}{2}} \label{m_i/|X|}
\end{align}
for $j=0,1,\dots,d$.
From \eqref{v_i} it follows that
\begin{align}
	\sum_{i=0}^d \q^i v_i(\theta_j) &= \sum_{h=0}^d \frac{(\b^{-j};\b)_h\b^{jh} \q^h}{(\b;\b)_h} \notag \,\sum_{i=h}^d \frac{(\b^{j-d};\b)_{i-h} (\alpha-\beta+\beta \b;\alpha \b^j;\b)_{i-h} \b^{(i-h)(d-j)}\q^{i-h} }{(\alpha+1-\b;\alpha;\b)_{i-h} (\b;\b)_{i-h}} \notag \\
	&= \basichypergeometricseries{1}{0}{\b^{-j}}{-}{\b;\b^j\q} \sum_{\ell=0}^{d-j} \frac{(\b^{j-d};\b)_{\ell} (\alpha-\beta+\beta \b;\alpha \b^j;\b)_{\ell} \b^{\ell(d-j)}\q^{\ell} }{ (\alpha+1-\b;\alpha;\b)_{\ell} (\b;\b)_{\ell}} \notag \\
	&= (\q;\b)_j \sum_{\ell=0}^{d-j} \frac{(\b^{j-d};\b)_{\ell} (\alpha-\beta+\beta \b;\alpha \b^j;\b)_{\ell} \b^{\ell(d-j)}\q^{\ell} }{ (\alpha+1-\b;\alpha;\b)_{\ell} (\b;\b)_{\ell}} \label{sum t^i v_i}
\end{align}
for $j=0,1,\dots,d$, where we have used the $q$-binomial theorem (cf.~\cite[Section~0.5]{KS1998R})
\begin{gather}\label{q-binomial theorem}
	\basichypergeometricseries{1}{0}{\b^{-n}}{-}{\b;\mathfrak{x}} = (\mathfrak{x} \b^{-n};\b)_n, \qquad n=0,1,2,\dots.
\end{gather}
Note that the last sum in \eqref{sum t^i v_i} is a $_2\phi_1$ in general.

\subsection[Case beta/sqrt k rightarrow rho>0]
{Case $\boldsymbol{\beta/\sqrt{k}\rightarrow\rho>0}$}
\label{subsec: 1st case}

With reference to Assumption \ref{assume classical parameters}, suppose that we are in Theorem \ref{classification}\,$(i)$, or $(ii)$ with $\rho>0$.
For $(i)$, we moreover assume that $\rho$ is indeed an accumulation point of $\beta/\sqrt{k}$, and will consider a subnet of $(\Gamma_{\lambda})_{\lambda\in\Lambda}$ for which $\beta/\sqrt{k}\rightarrow\rho$ if necessary (i.e., if $\alpha/\rho$ is also an accumulation point).
We note that $k\rightarrow\infty$ (cf.~\eqref{k diverges}), $|\beta|\rightarrow\infty$, $t\rightarrow 0$, and that
\begin{gather*}
	\frac{\beta(\b-1)}{q^d}\sim \frac{\beta^2}{k}\rightarrow \rho^2, \qquad \q\beta=t\sqrt{k}\frac{\beta}{\sqrt{k}}\rightarrow \gamma\rho, \qquad \q\b^d\rightarrow \frac{\gamma(q-1)}{\rho}.
\end{gather*}
Using this, \eqref{eigenvalues}, \eqref{m_i/|X|}, and \eqref{sum t^i v_i}, we routinely compute
{\samepage\begin{gather}
	\frac{\theta_{d-j}-\q k}{\Sigma_{\q}(A)} \rightarrow \frac{ \gauss{j}{1}(\rho-\alpha/\rho\b^j) -1/\rho\b^j -\gamma }{ \sqrt{1+\gamma(\rho+\alpha/\rho)} }, \label{case 1: limit points}
\\
	\frac{m_{d-j}}{|X|} \rightarrow \frac{ \big(\alpha/\rho^2\b^{j+1};\b^{-1}\big)_{\infty} (\alpha+1-\b;\alpha;\b)_j \big(1-(\alpha+1-\b)/\rho^2\b^{2j}\big) }{ \big((\alpha+1-\b)/\rho^2\b^j;\b^{-1}\big)_{\infty} (\b;\b)_j \rho^{2j} \b^{j^2-j} }, \label{case 1: limit masses 1st part}
\\
	\sum_{i=0}^d \q^i v_i(\theta_{d-j}) \rightarrow \big(\gamma(\b-1)/\rho\b^{j+1};\b^{-1}\big)_{\infty} \sum_{\ell=0}^j \frac{ (\b^{-j};\b)_{\ell} \big(\alpha/\rho^2\b^j;\b\big)_{\ell} \gamma^{\ell} \rho^{\ell} \b^{j\ell} (\b-1)^{\ell} }{ (\alpha+1-\b;\alpha;\b)_{\ell} (\b;\b)_{\ell} } \label{case 1: limit masses 2nd part}
\end{gather}}\noindent
for $j=0,1,2,\dots$.
The measure \eqref{normalized distribution} converges weakly to the discrete measure $\mu_{\infty}$ on $\mathbb{R}$ defined on the limit points in \eqref{case 1: limit points}, where the masses are given by the products of the limits in \eqref{case 1: limit masses 1st part} and \eqref{case 1: limit masses 2nd part}.\footnote{This follows for example from the observation that $\mu((a,b))\rightarrow\mu_{\infty}((a,b))$ for every bounded open interval $(a,b)$ in $\mathbb{R}$ and \cite[Theorem 8.2.17]{Bogachev2007B}. When $q>0$, it is also immediate to check that the distribution function of $\mu$ converges to that of $\mu_{\infty}$ at the points of continuity of the latter. }

\subsection[Case alpha not= 0, beta sqrt k rightarrow alpha/rho]
{Case $\boldsymbol{\alpha\ne 0}$, $\boldsymbol{\beta/\sqrt{k}\rightarrow \alpha/\rho}$}
\label{subsec: 2nd case}

With reference to Assumption \ref{assume classical parameters}, suppose that we are in Theorem \ref{classification}\,$(i)$, and that $\alpha/\rho$ is an accumulation point of $\beta/\sqrt{k}$.
We will consider a subnet of $(\Gamma_{\lambda})_{\lambda\in\Lambda}$ for which $\beta/\sqrt{k}\rightarrow\alpha/\rho$ if necessary.
The formulas for the limit distribution are simply obtained by replacing $\rho$ by $\alpha/\rho$ in those of the previous case:
\begin{gather*}
	\frac{\theta_{d-j}-\q k}{\Sigma_{\q}(A)} \rightarrow \frac{ \gauss{j}{1}\big(\alpha/\rho-\rho/\b^j\big) -\rho/\alpha\b^j -\gamma }{ \sqrt{1+\gamma(\rho+\alpha/\rho)} },
\\
	\frac{m_{d-j}}{|X|} \rightarrow \frac{ \big(\rho^2/\alpha\b^{j+1};\b^{-1}\big)_{\infty} (\alpha+1-\b;\alpha;\b)_j \big(1-(\alpha+1-\b)\rho^2/\alpha^2\b^{2j}\big) \rho^{2j} }{ \big((\alpha+1-\b)\rho^2/\alpha^2\b^j;\b^{-1}\big)_{\infty} (\b;\b)_j \alpha^{2j} \b^{j^2-j} },
\\
	\sum_{i=0}^d \q^i v_i(\theta_{d-j}) \rightarrow \big(\gamma\rho(\b-1)/\alpha\b^{j+1};\b^{-1}\big)_{\infty} \sum_{\ell=0}^j \frac{ (\b^{-j};\b)_{\ell} \big(\rho^2/\alpha\b^j;\b\big)_{\ell} \alpha^{\ell} \gamma^{\ell} \b^{j\ell} (\b-1)^{\ell} }{ (\alpha+1-\b;\alpha;\b)_{\ell} (\b;\b)_{\ell} \rho^{\ell} }
\end{gather*}
for $j=0,1,2,\dots$.
We note that, while the roles of $\rho$ and $\alpha/\rho$ are interchangeable when $\b>0$, their distinction is essential when $\b<0$, as $\rho=\sqrt{-\alpha}$ and $\alpha/\rho=-\sqrt{-\alpha}$.

\subsection[Case alpha = 0, beta sqrt k rightarrow 0]
{Case $\boldsymbol{\alpha=0}$, $\boldsymbol{\beta/\sqrt{k}\rightarrow 0}$}
\label{subsec: 3rd case}

With reference to Assumption \ref{assume classical parameters}, suppose that we are in Theorem \ref{classification}\,$(ii)$ with $\rho=0$.
Note that $\b>0$ in this case, and let
\begin{gather*}
	c=\bigl\lfloor \log_{\b}\sqrt{k}\bigr\rfloor.
\end{gather*}
Then we have $\sqrt{k}/\b^c\in[1,\b)$.
Let $\eta/\sqrt{\b-1}\in[1,\b]$ be an accumulation point of $\sqrt{k}/\b^c$, and consider a subnet of $(\Gamma_{\lambda})_{\lambda\in\Lambda}$ for which $\sqrt{k}/\b^c\rightarrow\eta/\sqrt{\b-1}$ if $\eta$ is not unique.
We note that $k\rightarrow\infty$, $\q\rightarrow 0$, $c\rightarrow\infty$, $d-c\rightarrow\infty$, and that
\begin{gather*}
	\beta\b^{d-2c} \sim \frac{k(\b-1)}{\b^{2c}}\rightarrow\eta^2, \qquad \q\beta\rightarrow 0, \qquad \q\b^c\rightarrow \frac{\gamma\sqrt{\b-1}}{\eta}.
\end{gather*}
Using this, \eqref{eigenvalues}, \eqref{m_i/|X|}, and \eqref{sum t^i v_i}, we obtain
\begin{gather*}
\frac{\theta_{c-j}-\q k}{\Sigma_{\q}(A)} \rightarrow \frac{ \eta\b^j-1/\eta\b^j }{ \sqrt{q-1} } -\gamma,
\\
\frac{m_{c-j}}{|X|} \rightarrow \frac{ \big(1+1/\eta^2\b^{2j}\big) \b^{-2j^2+j} }{ \big(\b^{-1};\b^{-1}\big)_{\infty} \big({-}1/\eta^2;\b^{-1}\big)_{\infty} \big({-}\eta^2/\b;\b^{-1}\big)_{\infty} \eta^{4j} },
\\
\sum_{i=0}^d \q^i v_i(\theta_{c-j}) \rightarrow \big(\gamma\sqrt{\b-1}/\eta\b^{j+1};\b^{-1}\big)_{\infty} \big({-}\gamma\eta\b^{j-1}\sqrt{\b-1};\b^{-1}\big)_{\infty}
\end{gather*}
for $j=0,\pm 1,\pm 2,\dots$, where we have used
\begin{gather*}
	\big({-}\b^{c-j-d}/\beta;\b\big)_{d+1} = \big({-}\b^{c-j-d}/\beta;\b\big)_{c+j+1} \big({-}\b^{2c-d+1}/\beta;\b\big)_{d-c-j}
\end{gather*}
for the second formula, and \eqref{q-binomial theorem} for the third one.
Again, the measure \eqref{normalized distribution} converges weakly to the discrete measure $\mu_{\infty}$ on $\mathbb{R}$ defined by the above limits.

\section{Examples}\label{sec: examples}

There are currently eleven known infinite families of distance-regular graphs having classical parameters with unbounded diameter and such that $\b\ne 1$.
In this section, we apply the results of the previous sections to these eleven families.
For more detailed information on these families, see the references given.
It should be remarked that, by virtue of Proposition~\ref{negative powers of q}, there exist infinitely many choices for the scalar $\gamma$ in Theorem~\ref{classification}\,$(i)$ and $(ii)$.

\subsection{Grassmann graphs and twisted Grassmann graphs}\label{subsec: Grassmann}

The \emph{Grassmann graph} $J_{\b}(n,d)$ has as vertices the $d$-dimensional subspaces of the $n$-dimensional vector space $\mathbb{F}_{\b}^n$ over the finite field $\mathbb{F}_{\b}$ with $\b$ elements, where two vertices $x$ and $y$ are adjacent when $\dim x\cap y=d-1$; cf.~\cite[Section~9.1]{BCN1989B}.
We always assume that $n\geqslant 2d$, as $J_{\b}(n,d)$ and $J_{\b}(n,n-d)$ are isomorphic.
The graph $J_{\b}(n,d)$ has classical parameters $(d,\b,\alpha,\beta)$, where
\begin{gather*}
	\alpha=\b, \qquad \beta=\b\genfrac{[}{]}{0pt}{}{n-d}{1}.
\end{gather*}
Fix $\b$ and let $d\rightarrow\infty$, $t\sqrt{k}\rightarrow \gamma$, and let also $n-2d+1\rightarrow 2\delta$ for some $\delta\in \frac{1}{2}\mathbb{Z}$, so that we have $\beta/\sqrt{k}\rightarrow \rho:=\b^{\delta}$.
We are in Theorem \ref{classification}\,$(i)$, and from the results of Section~\ref{subsec: 1st case} it follows that the measure \eqref{normalized distribution} converges weakly to $\mu_{\infty}$, where
\begin{gather*}
\mu_{\infty}\bigg(\frac{\b^{\delta+j}+\b^{-\delta-j}-\b^{\delta}-\b^{1-\delta}-\gamma(\b-1)}
{(\b-1)\sqrt{1+\gamma(\b^{\delta}+\b^{1-\delta})}}\bigg)
\\ \qquad
{} = \bigg(\frac{1}{\b^{j(2\delta+j-1)}}-\frac{1}{\b^{(j+1)(2\delta+j)}}\bigg) \big(\gamma(\b-1)/\b^{\delta+j+1};\b^{-1}\big)_{\infty}
\\ \qquad\hphantom{=}
{}\times \basichypergeometricseries{2}{1}{\b^{-j},\b^{1-2\delta-j}}{\b}{\b;\gamma(\b-1)\b^{\delta+j}}
\end{gather*}
for $j=0,1,2,\dots$.
Note that $\delta\geqslant 1/2$ and $\alpha/\rho=\b^{1-\delta}$, from which it follows that a different choice of $\delta$ gives rise to a different limit in Theorem \ref{classification}\,$(i)$.
Hora \cite{Hora1998IDAQPRT} previously obtained $\mu_{\infty}$ for the vacuum state $\varphi_0$, i.e., for $\gamma=0$.

The \emph{twisted Grassmann graph} $\tilde{J}_{\b}(2d+1,d)$ is defined as follows.
Fix a hyperplane $H$ of $\mathbb{F}_{\b}^{2d+1}$.
The vertex set consists of the $(d+1)$-dimensional subspaces of $\mathbb{F}_{\b}^{2d+1}$ which are not contained in $H$, together with the $(d-1)$-dimensional subspaces of $H$.
Two vertices $x$ and $y$ are adjacent when $\dim x+\dim y-2\dim x\cap y=2$.
See \cite{DK2005IM}.
The graph $\tilde{J}_{\b}(2d+1,d)$ has the same classical parameters as $J_{\b}(2d+1,d)$, and hence we obtain the above measure $\mu_{\infty}$ with $\delta=1$.

\subsection{Dual polar graphs and Hemmeter graphs}
\label{subsec: dual polar}

Consider one of the following vector spaces $V$ endowed with a non-degenerate form:
\begin{quote}
$C_d(\b)$: $V=\mathbb{F}_{\b}^{2d}$ with a symplectic form; \\[.5ex]
$B_d(\b)$: $V=\mathbb{F}_{\b}^{2d+1}$ with a quadratic form; \\[.5ex]
$D_d(\b)$: $V=\mathbb{F}_{\b}^{2d}$ with a quadratic form of Witt index $d$; \\[.5ex]
$^2\!D_{d+1}(\b)$: $V=\mathbb{F}_{\b}^{2d+2}$ with a quadratic form of Witt index $d$; \\[.5ex]
$^2\!A_{2d}(r)$: $V=\mathbb{F}_{\b}^{2d+1}$ with a Hermitian form ($\b=r^2$); \\[.5ex]
$^2\!A_{2d-1}(r)$: $V=\mathbb{F}_{\b}^{2d}$ with a Hermitian form ($\b=r^2$).
\end{quote}
We note that maximal isotropic subspaces of $V$ have dimension $d$.
The \emph{dual polar graph} on $V$ has as vertices these maximal isotropic subspaces, where two vertices $x$ and $y$ are adjacent when $\dim x\cap y=d-1$; cf.~\cite[Section~9.4]{BCN1989B}.
This graph has classical parameters $(d,\b,0,\b^e)$, where we let $e$ be 1, 1, 0, 2, 3/2, 1/2 in the respective types $C_d(\b)$, $B_d(\b)$, $D_d(\b)$, $^2\!D_{d+1}(\b)$, $^2\!A_{2d}(r)$, and~$^2\!A_{2d-1}(r)$.
Fix one of these types as well as $\b$, and let $d\rightarrow\infty$, $t\sqrt{k}\rightarrow \gamma$.
We~have $\beta/\sqrt{k}\rightarrow 0$, and hence we are in Theorem \ref{classification}\,$(ii)$ with $\rho=0$.
Let the scalar $c$ be as in Section~\ref{subsec: 3rd case}.
Note that
\begin{gather*}
	c=\bigg\lfloor \frac{\log_{\b}(\b^d-1)-\log_{\b}(\b-1)+e}{2}\bigg\rfloor.
\end{gather*}
Using $d\sim \log_{\b}(\b^d-1)<d$ and $0\leqslant\log_{\b}(\b-1)<1$, we obtain the value of $c$ for sufficiently large~$d$ as follows:
\begin{center}\setlength{\tabcolsep}{5.5pt}
\renewcommand{\arraystretch}{1.2}
\begin{tabular}{c|c|c|c|c|c}
\hline
& $e=1$ & $e=0$ & $e=2$ & $e=3/2$ & $e=1/2$ \\
\hline
$d$ even & $d/2$ & $d/2-1$ & $d/2$ & $d/2$ & $d/2-1$ \\
$d$ odd & $(d-1)/2$ & $(d-1)/2$ & $(d+1)/2$ & $(d-1)/2$ & $(d-1)/2$ \\
\hline
\end{tabular}
\end{center}
For the last two cases ($e\in\{3/2,1/2\}$), we have also used $\b\geqslant 4$ (as $\b=r^2$ is a square) and $\log_4 3=0.792\dots$.
It follows that $\sqrt{k}/\b^c\in[1,\b)$ has two accumulation points, and considering limits for even $d$ and odd $d$ separately, the scalar $\eta$ from Section~\ref{subsec: 3rd case} is given as in the following table:
\begin{center}\setlength{\tabcolsep}{7.5pt}
\renewcommand{\arraystretch}{1.2}
\begin{tabular}{c|c|c|c|c|c}
\hline
& $e=1 $& $e=0$ & $e=2$ & $e=3/2$ & $e=1/2$ \\
\hline
$d$ even & $\b^{1/2}$ & $\b$ & $\b$ & $\b^{3/4}$ & $\b^{5/4}$ \\
$d$ odd & $\b$ & $\b^{1/2}$ & $\b^{1/2}$ & $\b^{5/4}$ & $\b^{3/4}$ \\
\hline
\end{tabular}
\end{center}
The measure \eqref{normalized distribution} converges weakly to the measure $\mu_{\infty}$ as described in Section~\ref{subsec: 3rd case}.
(We do not write down the result here, since there are four values of $\eta$ and also since the substitution of these values does not seem to simplify the formula significantly.)

Note that the graphs $C_d(\b)$ and $B_d(\b)$ share the same classical parameters $(d,\b,0,\b)$.
The \emph{extended bipartite double} of a graph $\Gamma=(X,R)$ is the graph with vertex set $\mathbb{F}_2\times X$, where a~vertex $(\epsilon,x)$ is adjacent to $(\epsilon+1,x)$ and all the vertices $(\epsilon+1,y)$ with $x\sim y$.
The graph $D_d(\b)$ is shown to be isomorphic to the extended bipartite double of $B_{d-1}(\b)$.
The \emph{Hemmeter graph} $\operatorname{Hem}_d(\b)$ is then defined as the extended bipartite double of $C_{d-1}(\b)$; cf.~\cite[Section~9.4C]{BCN1989B}.
It has the same classical parameters $(d,\b,0,1)$ as $D_d(\b)$, so that we obtain the above $\mu_{\infty}$ for $e=0$.

\subsection{Half dual polar graphs and Ustimenko graphs}
\label{subsec: half dual polar}

Recall that a graph $\Gamma=(X,R)$ is said to be \emph{bipartite} whenever there is a bipartition $X=X^+\sqcup X^-$ such that no edge is contained in $X^+$ or $X^-$.
In this case, a \emph{halved graph} of $\Gamma$ has as vertex set either $X^+$ or $X^-$, where two distinct vertices are adjacent when there is a~path of length two joining them in $\Gamma$.
The dual polar graph $D_n(r)$ and the Hemmeter graph $\operatorname{Hem}_n(r)$ are bipartite, and their halved graphs are called the \emph{half dual polar graph} $D_{n,n}(r)$ and the \emph{Ustimenko graph} $\operatorname{Ust}_n(r)$, respectively; cf.~\cite[Section~9.4C]{BCN1989B}.
These graphs have classical parameters $(d,\b,\alpha,\beta)$, where
\begin{gather*}
	d=\left\lfloor \frac{n}{2}\right\rfloor, \qquad \b=r^2, \qquad \alpha=r(r+1), \qquad \beta=\frac{r(r^m-1)}{r-1},
\end{gather*}
where $m=2\lceil n/2\rceil-1$.
Fix $r$, and let $d\rightarrow\infty$, $t\sqrt{k}\rightarrow \gamma$.
Note that $\beta/\sqrt{k}$ has two accumulation points $\sqrt{r+1}$ and $r\sqrt{r+1}=\alpha/\sqrt{r+1}$, where $\beta/\sqrt{k}\rightarrow\sqrt{r+1}$ for even $n$, and $\beta/\sqrt{k}\rightarrow r\sqrt{r+1}$ for odd $n$.
We consider limits for even $n$ and odd $n$ separately.
Set $\epsilon=0$ for even $n$, and $\epsilon=1$ for odd $n$.
From the results of Sections \ref{subsec: 1st case} and \ref{subsec: 2nd case} it follows that the measure \eqref{normalized distribution} converges weakly to $\mu_{\infty}$, where
\begin{gather*}
\mu_{\infty}\bigg(\frac{r^{\epsilon+2j}+r^{-\epsilon-2j}-r-1-\gamma(r-1)\sqrt{r+1}}
{(r-1)\sqrt{r+1+\gamma(r+1)^{5/2}}}\bigg)
\\ \qquad
{}= \frac{ r^{2\epsilon+4j}-1 }{ r^{\epsilon+2j}-1 } \, \frac{ (r^{-1};r^{-2})_{\infty}}{r^{(\epsilon+j)(2j+1)}(r^{-2};r^{-2})_{\infty}} \, \big(\gamma(r-1)\sqrt{r+1}/r^{\epsilon+2j+2};r^{-2}\big)_{\infty}
\\ \qquad\hphantom{=}
{}\times \basichypergeometricseries{2}{1}{r^{-2j},r^{1-2\epsilon-2j}}{r}{r^2;\gamma r^{\epsilon+2j}(r-1)\sqrt{r+1}}
\end{gather*}
for $j=0,1,2,\dots$, where we understand that $\big(r^0-1\big)/\big(r^0-1\big)=1$ when $\epsilon=j=0$.

\subsection[Second classical parameters for dual polar graphs 2A2d-1(r)]
{Second classical parameters for dual polar graphs $\boldsymbol{{}^2\!A_{2d-1}(r)}$}\label{subsec: second dual polar}

The Hermitian dual polar graph $^2\!A_{2d-1}(r)$ has another set of classical parameters $(d,\b,\alpha,\beta)$, where (cf.~\eqref{two expressions})
\begin{gather*}
\b=-r, \qquad \alpha=\frac{r(r+1)}{1-r}, \qquad \beta=\frac{r(1+(-r)^d)}{1-r}.
\end{gather*}
Fix $r$, and let $d\rightarrow\infty$, $t\sqrt{k}\rightarrow \gamma$.
Note that $\beta/\sqrt{k}$ has two accumulation points $\pm\sqrt{-\alpha}$, where $\beta/\sqrt{k}\rightarrow -\sqrt{-\alpha}$ for even $d$, and $\beta/\sqrt{k}\rightarrow \sqrt{-\alpha}$ for odd $d$.
We consider limits for even $d$ and odd $d$ separately.
According to whether $\beta/\sqrt{k}\rightarrow \pm\sqrt{-\alpha}$, the measure \eqref{normalized distribution} converges weakly to $\mu_{\infty}$, where
\begin{gather*}
\mu_{\infty}\bigg( \frac{\mp\sqrt{r} (-r)^j \pm \sqrt{r}^{-1} (-r)^{-j} }{ \sqrt{r^2-1} } -\gamma \bigg)
\\ \qquad
{}= \frac{ (r^{2j+1}\!+\!1) (r^{-1};-r^{-1})_{\infty} }{ r^{(j+1)^2} (-r^{-1};-r^{-1})_{\infty} }
\bigg(\!\!\mp \gamma (-r)^{-j-1} \sqrt{\frac{r^2\!-\!1}{r}};-r^{-1}\! \bigg)_{\!\!\infty}\!\! \bigg(\!\! \mp \gamma (-r)^{j-1} \sqrt{\frac{r^2\!-\!1}{r}}; r^{-2}\!\bigg)_{\!\! j}
\end{gather*}
for $j=0,1,2,\dots$.
Here we have again used \eqref{q-binomial theorem} to get the result.
We may routinely verify that this measure is identical to the one in Section~\ref{subsec: dual polar} with $e=1/2$, using
\begin{gather*}
	\big(r^{-1};-r^{-1}\big)_{\infty} \big({-}r^{-1};r^{-2}\big)_{\infty} = 1,
\end{gather*}
which is a special case of Lebesgue's identity; see, e.g., \cite{Fu2008DM}.

\subsection{Sesquilinear forms graphs}\label{subsec: sesquilinear forms}

There are four infinite families of sesquilinear forms graphs, all of which are Cayley graphs.

The \emph{bilinear forms graph} $\operatorname{Bil}(d\times e,\b)$ has as vertices the $d\times e$ matrices over $\mathbb{F}_{\b}$, where two vertices $x$ and $y$ are adjacent when $\operatorname{rank}(x-y)=1$; cf.~\cite[Section~9.5A]{BCN1989B}.
We always assume that $d\leqslant e$, as $\operatorname{Bil}(d\times e,\b)$ and $\operatorname{Bil}(e\times d,\b)$ are isomorphic.
The graph $\operatorname{Bil}(d\times e,\b)$ has classical parameters $(d,\b,\b-1,\b^e-1)$.
Fix $\b$ and let $d\rightarrow\infty$, $t\sqrt{k}\rightarrow \gamma$, and $e-d\rightarrow 2\delta$ for some $\delta\in \frac{1}{2}\mathbb{Z}$, so that $\beta/\sqrt{k}\rightarrow \rho:=\b^{\delta}\sqrt{\b-1}$.
The measure~\eqref{normalized distribution} converges weakly to $\mu_{\infty}$, where
\begin{gather*}
\mu_{\infty}\bigg(\frac{\b^{\delta+j}-\b^{\delta}-\b^{-\delta}-\gamma\sqrt{\b-1}}{\sqrt{ \b-1+\gamma(\b^{\delta}+\b^{-\delta})(\b-1)^{3/2} }}\bigg)
\\ \qquad
{}= \frac{ (-1)^j (\b^{-2\delta-j-1};\b^{-1})_{\infty} }{ (\b;\b)_j \b^{2\delta j+\binom{j}{2}} } \, \bigl(\gamma\sqrt{\b\!-\!1}/\b^{\delta+j+1};\b^{-1}\bigr)_{\infty}
 \basichypergeometricseries{2}{0}{\b^{-j},\b^{-2\delta-j}}{-}{\b;\gamma\b^{\delta+j}\sqrt{\b\!-\!1}}
\end{gather*}
for $j=0,1,2,\dots$.
Since $\delta\geqslant 0$ and $\alpha/\rho=\b^{-\delta}\sqrt{\b-1}$, it follows that a different choice of $\delta$ gives rise to a different limit in Theorem \ref{classification}\,$(i)$.

The \emph{alternating forms graph} $\operatorname{Alt}(n,r)$ has as vertices the $n\times n$ skew-symmetric matrices with zero diagonal over $\mathbb{F}_r$, where two vertices $x$ and $y$ are adjacent when $\operatorname{rank}(x-y)=2$; cf.~\cite[Section~9.5B]{BCN1989B}.
It has classical parameters $(d,\b,\alpha,\beta)$, where
\begin{gather*}
d=\bigg\lfloor \frac{n}{2}\bigg\rfloor, \qquad \b=r^2, \qquad \alpha=r^2-1, \qquad \beta=r^m-1,
\end{gather*}
where $m=2\lceil n/2\rceil-1$.
Fix $r$, and let $d\rightarrow\infty$, $t\sqrt{k}\rightarrow \gamma$.
Note that we can apply the above computation with $e:=m/2$.
Hence we consider limits for even $n$ and odd $n$ separately, and set $\delta=-1/4$ for even $n$ and $\delta=1/4$ for odd $n$.
We then obtain the above measure $\mu_{\infty}$ with $\b=r^2$.

The \emph{quadratic forms graph} $\operatorname{Qua}(n-1,r)$ has as vertices the quadratic forms on $\mathbb{F}_r^{n-1}$, where two vertices $x$ and $y$ are adjacent when $\operatorname{rank}(x-y)\in\{1,2\}$.
See \cite[Section~9.6]{BCN1989B} for a precise description.
This graph has the same classical parameters as $\operatorname{Alt}(n,r)$, and thus the result is the same as above.

Finally, the \emph{Hermitian forms graph} $\operatorname{Her}\big(d,r^2\big)$ has as vertices the $d\times d$ Hermitian matrices over~$\mathbb{F}_{r^2}$, where two vertices $x$ and $y$ are adjacent when $\operatorname{rank}(x-y)=1$; cf.~\cite[Section~9.5C]{BCN1989B}.
It~has classical parameters $(d,-r,-r-1,-(-r)^d-1)$.
Fix $r$, and let $d\rightarrow\infty$, $t\sqrt{k}\rightarrow \gamma$.
We consider limits for even $d$ and odd $d$ separately, where $\beta/\sqrt{k}\rightarrow -\sqrt{r+1}$ for even $d$, and $\beta/\sqrt{k}\rightarrow \sqrt{r+1}$ for odd $d$.
According to whether $\beta/\sqrt{k}\rightarrow \pm\sqrt{r+1}$, the measure \eqref{normalized distribution} converges weakly to $\mu_{\infty}$, where
\begin{align*}
\mu_{\infty}\bigg(\!\!\mp\frac{(-r)^j }{\sqrt{ r+1 }} -\gamma \bigg)={}& \frac{ \big({-}(-r)^{-j-1};-r^{-1}\big)_{\infty} }{ (-r;-r)_j (-r)^{\binom{j}{2}} } \, \bigl({}\mp \gamma\sqrt{r+1}/(-r)^{j+1};-r^{-1}\bigr)_{\infty}
\\
&{}\times \basichypergeometricseries{2}{0}{(-r)^{-j},-(-r)^{-j}}{-}{-r; \pm\gamma(-r)^j\sqrt{r+1}}
\end{align*}
for $j=0,1,2,\dots$.

\begin{Remark}
Set $\gamma=0$ in the above examples, which is the case when we take scaling limits of the vacuum state $\varphi_0$.
The measure $\mu_{\infty}$ is then an affine transformation of the discrete orthogonality measure of the \emph{Al-Salam--Chihara polynomials} (cf.~\cite[Section~3.8]{KS1998R}) for Sections~\ref{subsec: Grassmann} and~\ref{subsec: half dual polar}, that of the \emph{continuous} $q^{-1}$-\emph{Hermite polynomials} (cf.~\cite[Section~3.26]{KS1998R}) for Sections~\ref{subsec: dual polar} and~\ref{subsec: second dual polar}, and that of the \emph{Al-Salam--Carlitz II polynomials} (cf.~\cite[Section~3.25]{KS1998R}) with base $q^{-1}$ for Section~\ref{subsec: sesquilinear forms}.
For the discrete orthogonality measures of the first two families of orthogonal polynomials, see, e.g., \cite[equation~(3.82)]{AI1984MAMS}, \cite[equation~(3.18)]{AK2006ETNA}, and \cite[equation~(6.31)]{IM1994TAMS}.
\end{Remark}

\subsection*{Acknowledgements}

The authors thank the anonymous referees for valuable comments.
HT thanks Professor Tom Koornwinder for letting him know that the measure $\mu_{\infty}$ with $\gamma=0$ in Section~\ref{subsec: Grassmann} corresponds to the Al-Salam--Chihara polynomials, and for providing relevant references.
Part of this work was done while MK was visiting Tohoku University from February to July 2020, supported by K.N.~Toosi University of Technology, Office of Vice-Chancellor for Global Strategies and International Affairs.
NO and HT were supported by JSPS KAKENHI Grant Number JP19H01789.
HT was also supported by JSPS KAKENHI Grant Numbers JP17K05156 and JP20K03551.
This work was also partially supported by the Research Institute for Mathematical Sciences at Kyoto University.

\pdfbookmark[1]{References}{ref}
\LastPageEnding

\end{document}